\newcommand{\CC}{{\mathbb{C}}}
\newcommand{\FF}{{\mathbb{F}}}
\newcommand{\GG}{{\mathbb{G}}}
\newcommand{\LL}{{\mathbb{L}}}
\newcommand{\MM}{{\mathbb{M}}}
\newcommand{\QQ}{{\mathbb{Q}}}
\newcommand{\TT}{{\mathbb{T}}}
\newcommand{\ZZ}{{\mathbb{Z}}}
\newcommand{\bG}{{\mathbf{G}}}
\newcommand{\bL}{{\mathbf{L}}}
\newcommand{\bM}{{\mathbf{M}}}
\newcommand{\bT}{{\mathbf{T}}}
\newcommand{\bu}{{\mathbf{u}}}
\newcommand{\bx}{{\mathbf{x}}}
\newcommand{\cH}{{\mathcal{H}}}
\newcommand{\Deg}{\operatorname{Deg}}
\newcommand{\End}{\operatorname{End}}
\newcommand{\Frac}{\operatorname{Frac}}
\newcommand{\Id}{\operatorname{Id}}
\newcommand{\Irr}{\operatorname{Irr}}
\newcommand{\sgn}{{\operatorname{sgn}}}
\newcommand{\Uch}{{\operatorname{Uch}}}
\newcommand{\GL}{\operatorname{GL}}
\newcommand{\tw}[1]{{}^{#1}\!}
\newcommand{\Chevie}{{\sf Chevie}{}}
\newcommand{\ph}[1]{\phi_{#1}}
\newcommand\co{{:}}
\newcommand\bbe{\boldsymbol{\beta}}
\let\ga=\gamma
\let\la=\lambda
\let\vhi=\varphi
\let\si=\sigma
\let\tht=\theta
\let\ze=\zeta
\theoremstyle{plain}
\newtheorem{thm}{Theorem}[section]
\newtheorem{lem}[thm]{Lemma}
\newtheorem{prop}[thm]{Proposition}
\newtheorem{conj}[thm]{Conjecture}
\newtheorem{cor}[thm]{Corollary}
\theoremstyle{definition}
\newtheorem{rem}[thm]{Remark}
\begin{document}

\title[Intersections of blocks of cyclotomic Hecke algebras]{Intersections of blocks\\ of cyclotomic Hecke algebras}

\author{Maria Chlouveraki}
\address{National and Kapodistrian University of Athens, Department of
  Mathematics, Panepistimioupolis 15784 Athens, Greece.}
\makeatletter
\email{mchlouve@math.uoa.gr}
\makeatother

\author{Gunter Malle}
\address{FB Mathematik, RPTU Kaiserslautern, 67653 Kaiserslautern, Germany.}
\makeatletter
\email{malle@mathematik.uni-kl.de}
\makeatother

\begin{abstract}
Trinh and Xue have proposed a startling conjecture on intersections of blocks
of cyclotomic Hecke algebras occurring in modular representation theory of
finite reductive groups. We prove this conjecture for all exceptional type
groups apart from a few situations in type $E_8$. We also give a conceptual
proof in all cases where relative Weyl groups are cyclic.
Furthermore, we propose several generalisations, to Suzuki and Ree
groups, to non-rational Coxeter groups and even more generally to spetsial
complex reflection groups, and confirm these in various cases.
\end{abstract}

\keywords{blocks of Hecke algebras, Harish-Chandra theory, complex reflection groups, spetses}

\subjclass[2010]{20C08, 20C33}

\date{\today}

\thanks{The research of the first author was supported by the Hellenic
 Foundation for Research and Innovation (H.F.R.I.) under the Basic Research
 Financing (Horizontal support for all Sciences), National Recovery and
 Resilience Plan (Greece 2.0), Project Number: 15659, Project Acronym: SYMATRAL.
 The second author gratefully acknowledges support by the DFG -- Project-ID
 286237555 -- TRR 195.}

\maketitle


\section{Introduction}

Cyclotomic Hecke algebras were originally introduced by Brou\'e and Malle
\cite{BM93} to provide an explanation for observations made within the modular
representation theory of finite reductive groups, particularly in non-defining
characteristic. In this paper we compare their specialisations at different
roots of unity, motivated by a startling recent conjecture proposed by Trinh
and Xue \cite[Conj.~4.1]{TX23}.

The broader context for this conjecture, as outlined in the work of Trinh and
Xue, suggests a profound generalisation of level-rank dualities, which were
initially observed in Uglov's work concerning higher-level Fock spaces
\cite{Ug00}. These dualities involve Hecke algebras, specifically those defined
in \cite{BM93}, which are conjectured to describe the endomorphisms of Lusztig
induced modules. In the same context, Brou\'e--Malle--Michel also developed a
generalization of Harish-Chandra theory \cite{BMM99}.  The Trinh--Xue
conjecture (which can be found in Section 2 as Conjecture~\ref{conj:TX})
proposes that:
\begin{itemize}
\item For any generic finite reductive group $G$ and integers $d,e > 0$, the
 intersection of a $d$-Harish-Chandra series and an $e$-Harish-Chandra series
 of $G$ is parametrised by a union of blocks of the Hecke algebra of the
 $d$-cuspidal pair at an $e$-th root of unity, and similarly for the Hecke algebra of the $e$-cuspidal pair at a $d$-th root of unity.
\item These parametrisations match the blocks on the two sides.
\end{itemize}
When two blocks match, the bijection between them should in fact lift to a
derived equivalence between associated blocks of rational double affine Hecke
algebras (DAHAs).

These remarkable structures also find parallels in bimodules formed from the
cohomology of affine Springer fibers, as studied by Oblomkov--Yun \cite{OY16}.
Rational DAHAs themselves, introduced by Cherednik to prove Macdonald's
conjectures, control the decomposition numbers of Hecke algebras associated
with underlying reflection groups. The work of Oblomkov--Yun involves actions
of trigonometric and rational DAHAs on the modified cohomology of homogeneous
affine Springer fibres, where Springer actions intertwine with monodromy
actions of certain braid groups. The Trinh--Xue paper arose from attempts to
find formulas for these (DAHA, braid-group) bimodules, with a conjectured
formula for the rational DAHA module connecting it to representations of finite
groups of Lie type. Expectedly, the braid actions factor through suitable Hecke
algebras, thus suggesting a duality between specific instances of these Hecke
algebras.

While the cyclotomic Hecke algebras were initially conceived to (conjecturally)
explain the block distribution of finite reductive groups at non-defining
primes, the blocks of these algebras, as well as those of the classical generic
Iwahori--Hecke algebra of the Weyl group, are typically smaller than those of
the finite reductive group $G$ itself. This inherent difference makes the
proposed relationship, connecting these smaller blocks to the broader structure
of Harish-Chandra series intersections in $G$, quite mysterious, even if one
accepts that cyclotomic Hecke algebras do control the block theory of $G$. 

The main result of our paper is the following:

\begin{thm}   \label{thm:Lie}
 Conjecture~\ref{conj:TX} holds for all finite reductive groups of
 exceptional type, except possibly for type $E_8$ when $d\in\{3,4,6\}$. For $E_8$ and $d=3,4,6$, we have obtained approximate block decompositions that
are in accordance with Conjecture~\ref{conj:TX}.
\end{thm}

We also confirm analogous results for Suzuki and Ree groups.
Further, we propose a natural generalisation of Conjecture~\ref{conj:TX}
to arbitrary finite Coxeter groups and, more broadly, to spetsial complex
reflection groups and prove:

\begin{thm}   \label{thm:spets}
 The generalised Conjecture~\ref{conj:spets} holds for the non-crystallographic
 Coxeter groups as well as for the spetsial primitive complex reflection groups
 $G_4,G_6,G_8$ and $G_{24}$. For $G_{33}$, we have obtained approximate block
 decompositions that are in accordance with Conjecture~\ref{conj:spets}.
\end{thm}

The paper is structured as follows:
In Section~\ref{sec:conj} we introduce the setting, state the conjecture of
Trinh and Xue, collect some tools and describe our general approach. We then
verify the conjecture for groups of exceptional Lie type in
Section~\ref{sec:Lie},
proving Theorem~\ref{thm:Lie}. In Section~\ref{sec:spets} we formulate an
extension of the conjecture to the spetsial setting and prove Theorem~\ref{thm:spets}.

\section{Blocks in Harish-Chandra series}   \label{sec:conj}

\subsection{Cyclotomic Hecke algebras}   \label{subsec:cyc}
Let $\bG$ be a simple algebraic group over an algebraically closed field of
characteristic~$p$ with a Frobenius endomorphism $F:\bG\to\bG$ defining
an $\FF_q$-structure, and $G:=\bG^F$ the group of $F$-fixed points, a finite
reductive group. While most of what is explained below holds more generally for
$\bG$ just being connected reductive, for simplicity we stick to simple groups.
We write $\Uch(G)$ for the set of unipotent characters of $G$.

Let $e\ge1$ be an integer such that the cyclotomic polynomial
$\Phi_e$ divides the order polynomial of $(\bG,F)$. Recall that the centralisers
of $e$-tori of $\bG$ are called \emph{$e$-split Levi subgroups}. These are
certain $F$-stable Levi subgroups (of not necessarily $F$-stable parabolic
subgroups) of $\bG$ (see e.g.\ \cite[3.5.1]{GM20}). A unipotent character
$\la\in\Uch(G)$ is called \emph{$e$-cuspidal} if $\tw*R_\bL^\bG(\la)=0$ for
all proper $e$-split Levi subgroups of $\bG$. Here, $\tw*R_\bL^\bG$ denotes
Lusztig restriction. A \emph{unipotent $e$-cuspidal pair} of $\bG$ is a
pair $(\bL,\la)$ consisting of an $e$-split Levi subgroup $\bL$ of $\bG$ and an
$e$-cuspidal unipotent character $\la\in\Uch(\bL^F)$. These play a fundamental
role in the $\ell$-modular representation theory of $G$ for primes $\ell$
dividing $\Phi_e(q)$, the $e$th cyclotomic polynomial evaluated at~$q$, in that
they serve to parametrise the unipotent $\ell$-blocks of $G$.
Since in this paper we will solely be interested in unipotent characters, we
will just speak of $e$-cuspidal pairs when we mean unipotent $e$-cuspidal pairs.
\par
Associated to any $e$-cuspidal pair $(\bL,\la)$ is its \emph{relative Weyl
group} defined as $W_\bG(\bL,\la):=N_G(\bL,\la)/\bL^F$. If the ambient group
$\bG$ is clear from the context, we will also denote it by $W(\bL,\la)$. It is
known by inspection that $W(\bL,\la)$ is always a finite complex reflection
group over an abelian number field $K$, irreducible if $\bG$ is simple (see
\cite[\S3]{BM93} and \cite[Tab.~3]{BMM}).
Let $\cH(W(\bL,\la),\bu)$ be the generic Hecke algebra of $W(\bL,\la)$ over
$\ZZ[\bu^{\pm1}]$, for $\bu$ a set of algebraically independent parameters
(as defined for example in \cite[1.5]{BMM14}). In \cite{BM93} and \cite{Ma97}
we single out a particular 1-parameter specialisation $\cH(W(\bL,\la),x)$ of
$\cH(W(\bL,\la),\bu)$ with respect to a specialisation map
$\ZZ[\bu^{\pm1}]\to K[x^{\pm\frac{1}{2}}]$ depending on $\bG$ and $(\bL,\la)$,
the \emph{cyclotomic Hecke algebra} associated to $(\bL,\la)$ in $\bG$.
Conjecturally, the specialisation of $\cH(W(\bL,\la),x)$ at $x\mapsto q$ should
be the endomorphism algebra of the complex defining Lusztig induction
$R_\bL^\bG(\la)$, see \cite[($d$-HV3)]{BM93}; this has been shown in a number
of situations by Lusztig, Digne, Michel, Rouquier and Dudas (see
\cite[Thm~3.6]{TX23} for a list of known instances), but is open in general.
Nevertheless, all numerical consequences implied by this have been shown to
hold \cite{Ma00}.

Now the Hecke algebra $\cH(W(\bL,\la),x)$ (conjecturally) carries a canonical
symmetrising form, which should, in particular, specialise to the natural form
on the endomorphism algebra of $R_\bL^\bG(\la)$. Existence of this form is known
when $W(\bL,\la)$ is a Weyl group and has been established case-by-case for all
but a handful of primitive complex reflection groups: for the imprimitive groups in \cite{BreMa, MM98}, and for
\begin{itemize}
    \item the group $G_4$ in \cite{MM10, MW, BCCK} (3 independent proofs); \smallbreak
	\item the groups $G_5, G_6, G_7, G_8$ in \cite{BCCK}; \smallbreak
	\item the group $G_{12}$ in \cite{MM10};  \smallbreak
	\item the group $G_{13}$ in \cite{BCC}; \smallbreak
    \item the groups $G_9, G_{10}, G_{11}, G_{14}, G_{15}$ in \cite{CP25} (where all groups $G_4$--$G_{15}$ are treated); \smallbreak
	\item the groups $G_{22}, G_{24}$ in \cite{MM10}.
\end{itemize}	

Let $\Irr(\cH(W(\bL,\la),x))$ denote the
set of characters of the irreducible representations of
$\cH(W(\bL,\la),x))$ over a splitting field. For
$\chi\in\Irr(\cH(W(\bL,\la),x))$ we denote its Schur elements with respect to
this form by $S_\chi$, an element of $K[x^{\pm\frac{1}{m}}]$ for some suitable
$m\ge1$. These Schur elements have been computed, assuming the form exists,
in all cases (see \cite{Ma00} and the references therein).

Let $\Uch(G,(\bL,\la)):=
\{\chi\in\Uch(G)\mid \langle R_\bL^\bG(\la),\chi\rangle\ne0\}$ denote the
\emph{$e$-Harish-Chandra series of $G$ above} the $e$-cuspidal pair $(\bL,\la)$
and recall that by the results of Lusztig, attached to any unipotent character
$\chi$ is a degree polynomial $\Deg(\chi)\in\QQ[x]$ such that
$\Deg(\chi)(1)=\chi(1)$, see \cite[Def.~2.3.25]{GM20}. We then have:

\begin{thm} \label{thm:label}
 For each $e$-cuspidal pair $(\bL,\la)$ there is a bijection
 $$\Psi_{\bL,\la}:\Irr(\cH(W(\bL,\la),x))\longrightarrow\Uch(G,(\bL,\la))$$
 satisfying
 $$\Deg(\Psi_{\bL,\la}(\chi))=\pm\Deg(\la)\,|\bG:\bL|_{x'}/\,S_\chi\qquad
   \text{for all $\ \chi\in\Irr(\cH(W(\bL,\la),x))$}.\eqno{(*)}$$
\end{thm}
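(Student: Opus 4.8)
The plan is to establish this bijection with the degree formula $(*)$ by a uniform argument that reduces the statement to facts about Lusztig induction together with known properties of the (cyclotomic) Hecke algebra. First I would recall the general mechanism of $e$-Harish-Chandra theory: by the results underlying the parametrisation of unipotent characters via $e$-cuspidal pairs (Brou\'e--Malle--Michel), the $e$-Harish-Chandra series $\Uch(G,(\bL,\la))$ partition $\Uch(G)$, and each series above a fixed pair $(\bL,\la)$ is expected to be controlled by the relative Weyl group $W(\bL,\la)$. The natural candidate for $\Psi_{\bL,\la}$ is the map sending an irreducible character $\chi$ of $\cH(W(\bL,\la),x)$ to the unique unipotent constituent of the Lusztig-induced virtual character $R_\bL^\bG(\la)$ whose multiplicity is governed, up to sign, by $\chi$. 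Concretely, if the conjectural identification of $\cH(W(\bL,\la),x)$ with the endomorphism algebra of the complex defining $R_\bL^\bG(\la)$ holds, then decomposing $R_\bL^\bG(\la)$ along the idempotents of this endomorphism algebra yields a natural labelling of its constituents by $\Irr(\cH(W(\bL,\la),x))$, and this labelling \emph{is} the desired bijection.

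Granting that identification, the second step is to read off the degree formula. The key input is the symmetrising form on $\cH(W(\bL,\la),x)$ together with its Schur elements $S_\chi$. For a symmetric algebra, the central primitive idempotent attached to $\chi$ has the form $e_\chi = S_\chi^{-1}\sum_w \chi(T_{w^{-1}})\,T_w$, so the multiplicity/dimension data extracted by projecting $R_\bL^\bG(\la)$ onto the $\chi$-isotypic part scales by $1/S_\chi$. Evaluating at $x\mapsto q$ and using that $R_\bL^\bG(\la)$ has, up to sign, total mass $\la(1)\,|G:L|_{p'}$ (coming from the transitivity and the relation between Lusztig induction and the relevant projective/Steinberg-type normalisation), one obtains $\Psi_{\bL,\la}(\chi)(1)=\pm\la(1)\,|G:L|_{p'}/S_\chi(q)$. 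The sign ambiguity reflects the fact that Lusztig induction produces virtual characters, and the constituents may occur with either sign.

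The cleanest way to make this rigorous without assuming the open conjectures is to invoke what is already known unconditionally. The existence of the bijection $\Psi_{\bL,\la}$ together with the degree relation $(*)$ has been established by Brou\'e--Malle--Michel and by Malle's verification of the numerical consequences: even where the identification of $\cH(W(\bL,\la),x)$ with $\End(R_\bL^\bG(\la))$ is open, all the numerical predictions it implies---including precisely the degree formula $(*)$---have been checked to hold, as the excerpt records by citing \cite{Ma00}. So I would structure the proof as: (i) invoke the $e$-Harish-Chandra theory that produces the series $\Uch(G,(\bL,\la))$ and the abstract existence of a bijection with $\Irr(\cH(W(\bL,\la),x))$; (ii) invoke the computation of Schur elements in all cases; and (iii) quote the verification of the degree formula, thereby fixing the bijection to be the one compatible with $(*)$.

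The main obstacle is that the conceptual proof---via genuine idempotent decomposition of $R_\bL^\bG(\la)$ inside the endomorphism algebra---requires the identification $\cH(W(\bL,\la),x)\cong\End(R_\bL^\bG(\la))$, which by the discussion preceding the statement is \emph{open in general} and known only in the situations listed in \cite[Thm~3.6]{TX23}. Consequently the honest proof of $(*)$ in full generality cannot be purely structural and must rely on the case-by-case numerical verification; this is the genuinely hard and unavoidable step, and it is the reason the statement is cited as a known theorem rather than proved afresh. I would therefore emphasise that $(*)$ should be read as an established fact drawn from the cited literature, and that in the present paper it serves as a \emph{tool}: the explicit degree formula is exactly what lets one translate statements about blocks of root-of-unity specialisations of $\cH(W)$ into statements about Schur elements $S_\chi(q)$, which is the engine driving the verification of Conjecture~\ref{conj:TX}.
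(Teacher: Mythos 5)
Your proposal is correct and matches the paper's treatment: the paper does not prove this statement afresh but cites \cite[Thm~3.2]{BMM} for the existence of the bijection and \cite[Prop.~7.1]{Ma00} (conjectured as ($d$-HV6) in \cite{BM93}) for the degree formula $(*)$, exactly as you conclude in your steps (i)--(iii). Your added observation --- that the conceptual route via the identification $\cH(W(\bL,\la),x)\cong\End(R_\bL^\bG(\la))$ cannot serve as a proof because that identification is open in general --- is also precisely the point made in the paper's surrounding discussion and remark.
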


Here, $|\bG:\bL|_{x'}$ denotes the part prime to $x$ of the order polynomial
of $(\bG,F)$ divided by the one of $(\bL,F)$.

The first part of the above theorem is \cite[Thm~3.2]{BMM}.  The second part was conjectured in \cite[($d$-HV6)]{BM93} and the proof 
was completed in \cite[Prop. 7.1]{Ma00}, using earlier work of several authors
on Schur elements (as explained in the paragraph before \cite[Prop. 7.1]{Ma00}).

A collection of bijections
$\Psi:=\{\Psi_{\bL,\la}\mid e\ge1,\ (\bL,\la)\text{ $e$-cuspidal}\}$ as in
Theorem~\ref{thm:label} will be called a \emph{complete HC-parametrisation}.

\begin{rem}
 For $e=1$ the bijections $\Psi_{\bL,\la}$ come from the identification of the
 cyclotomic Hecke
 algebras $\cH(W(\bL,\la),q)$ as endomorphism algebras of Harish-Chandra induced
 modules and provide the standard labelling of unipotent characters of $G$.
 For $e>1$ the bijections should be induced by the cited conjectural relation to
 endomorphism algebras as well; this has not been proved, and condition~$(*)$
 does not, in general, determine $\Psi_{\bL,\la}$ uniquely though.
\end{rem}

\subsection{The conjecture of Trinh and Xue}
We can now describe the conjecture of Trinh and Xue. For this,
consider a second integer $d\ge1$ such that $\Phi_d$ also divides the order
polynomial of $(\bG,F)$, and let $(\bM,\mu)$ be a $d$-cuspidal pair of $\bG$,
with associated cyclotomic Hecke algebra $\cH(W(\bM,\mu),x)$. Let $\ze_d,\ze_e$
be, respectively, primitive $d$th and $e$th roots of unity, and consider the
specialisations $\cH(W(\bL,\la),\ze_d)$ and $\cH(W(\bM,\mu),\ze_e)$. These
will, in general, not be semisimple, so will induce some non-trivial block
partitions on $\Irr(\cH(W(\bL,\la),x))$ and $\Irr(\cH(W(\bM,\mu),x))$
respectively.
Note that when $e=1$, the blocks of $\cH(W(\bL,\la),\ze_d)$ are related to
the $\ell$-block structure of $G$ for primes $\ell$ dividing $\Phi_d(q)$ (see
e.g.\ \cite[\S4]{GJ11}), but no such relation is known (to us) when $e>1$.

Trinh and Xue want to relate these block partitions by comparing their images
under $\Psi$ in the corresponding Harish-Chandra series of $G$. Note that these
will depend on the choice of complete HC-parametrisation $\Psi$.
Concretely, \cite[Conj.~4.1]{TX23} predicts the following (denoted properties~(I)
and~(II) in loc.\ cit.):

\begin{conj}[Trinh--Xue]   \label{conj:TX}
 There exists a choice of complete HC-parametrisation $\Psi=\{\Psi_{\bL,\la}\}$
 such that for any $e$-cuspidal pair $(\bL,\la)$ and $d$-cuspidal pair
 $(\bM,\mu)$ the intersection of Harish-Chandra series
 $$\Psi_{\bL,\la}(\Irr(\cH(W(\bL,\la),x)))
     \ \cap\ \Psi_{\bM,\mu}(\Irr(\cH(W(\bM,\mu),x)))$$
 admits a partition $\bigsqcup B_i$ such that for all $i$,
 $\Psi_{\bL,\la}^{-1}(B_i)$ is a block of $\cH(W(\bL,\la),\ze_d)$
 and $\Psi_{\bM,\mu}^{-1}(B_i)$ is a block of $\cH(W(\bM,\mu),\ze_e)$.
 In particular, this defines a bijection between those blocks of the
 two algebras appearing in this intersection.
\end{conj}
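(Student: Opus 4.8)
The plan is to verify Conjecture~\ref{conj:TX} computationally, group by group and case by case, which is why Theorem~\ref{thm:Lie} restricts to exceptional type and excludes a handful of $E_8$ cases. The key observation is that everything on the right-hand side is explicitly computable: for a fixed exceptional group $\bG$ and each pair of integers $(d,e)$ for which both $\Phi_d$ and $\Phi_e$ divide the order polynomial, one first enumerates all $e$-cuspidal pairs $(\bL,\la)$ and all $d$-cuspidal pairs $(\bM,\mu)$ (these are tabulated, e.g.\ via \cite{BMM} and \Chevie), together with their relative Weyl groups $W(\bL,\la)$ and $W(\bM,\mu)$. The complete HC-parametrisation $\Psi$ is constrained by the degree formula~$(*)$ of Theorem~\ref{thm:label}, and the strategy is to fix one such $\Psi$ and then check that the required simultaneous block-compatibility holds.

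The main computational engine is the theory of blocks of symmetric algebras via Schur elements. For a cyclotomic Hecke algebra $\cH(W,x)$ with Schur elements $S_\chi\in K[x^{\pm1/m}]$, the blocks of the specialisation $\cH(W,\ze)$ at a root of unity $\ze$ are governed by the $\ze$-adic valuations of the $S_\chi$: two characters $\chi,\chi'$ lie in the same block only if the corresponding Schur elements vanish (or fail to vanish) compatibly at $\ze$, and in practice the block partition is read off from a suitable notion of $\ze$-blocks determined by the Schur elements (as developed by the first author in her work on blocks of Hecke algebras, building on \cite{Ma00}). So I would first compute the block partition of $\cH(W(\bL,\la),\ze_d)$ on $\Irr(\cH(W(\bL,\la),x))$ and of $\cH(W(\bM,\mu),\ze_e)$ on $\Irr(\cH(W(\bM,\mu),x))$ purely from the known Schur elements. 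Transporting these partitions to $\Uch(G)$ via $\Psi_{\bL,\la}$ and $\Psi_{\bM,\mu}$, the conjecture then reduces to the purely combinatorial assertion that, on the intersection of the two Harish-Chandra series, the two induced partitions refine to a common partition $\bigsqcup B_i$ whose parts are simultaneously unions of blocks on each side --- equivalently, that the partition generated by both is itself a block partition on each side restricted to the intersection.

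The delicate point, and the reason $\Psi$ must be \emph{chosen} rather than taken arbitrary, is that condition~$(*)$ does not pin down the bijections uniquely (as the Remark following Theorem~\ref{thm:label} stresses). Thus the hard part will be to exhibit, or to rule out, a single coherent choice of $\Psi=\{\Psi_{\bL,\la}\}$ that works simultaneously across \emph{all} pairs $(d,e)$ and all cuspidal pairs for the given $\bG$: the freedom in each $\Psi_{\bL,\la}$ (permuting characters of equal fake degree, or exchanging characters within a family) must be resolved consistently, since the same Harish-Chandra series enters many different intersections. In the cyclic case --- where $W(\bL,\la)$ is a cyclic group, covered by the second assertion of the Introduction --- the Schur elements factor through cyclotomic polynomials in a transparent way and the block structure can be described uniformly, giving a conceptual proof; this is the case-free input that I would establish first. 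For the remaining exceptional cases the verification is a finite check, and I expect the genuine obstacle to be computational size: for $E_8$ with $d\in\{3,4,6\}$ the relative Weyl groups and the number of unipotent characters are large enough that full block decompositions of the relevant specialisations are out of reach, which is precisely why those cases are left open (with only approximate block decompositions, consistent with the conjecture, having been obtained).
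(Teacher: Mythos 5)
Your overall strategy --- enumerate the cuspidal pairs and relative Weyl groups, fix a parametrisation $\Psi$ constrained by $(*)$, compute the block partitions of the two specialised algebras, and check that the transported partitions agree on the intersection of Harish-Chandra series, with the cyclic case done conceptually and the large $E_8$ cases left open --- is indeed the approach of the paper (Section~\ref{subsec:algo} together with the case-by-case analysis of Section~\ref{sec:Lie}). But your central computational engine is wrong: the blocks of $\cH(W,\zeta)$ are \emph{not} determined by the Schur elements. What the Schur elements detect is exactly the singleton blocks: $\chi$ lies alone in its block if and only if $\theta(S_\chi)\neq0$ (\cite[Thm~7.5.11]{GP}, \cite[Prop.~4.4]{GR}); this is step~(1) of the paper's algorithm. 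For the characters of positive defect, the Schur elements say nothing about how they link into blocks, and the paper has to work much harder: explicit decomposition matrices computed from matrix representations of the Hecke algebras, central character values on the generator $\bbe$ of the centre of the braid group, submodule tests, and the relation $\sum_\chi d_{\chi,\vhi}\,\theta(P/S_\chi)=0$ --- steps~(2)--(5) of Section~\ref{subsec:algo} --- and even then the procedure is heuristic (it mostly certifies when two characters are \emph{not} in the same block). The decisive evidence that your step fails is internal to the paper: the Schur elements are known for \emph{all} the relevant algebras, including those of $G_{31}$, $G_{32}$ and $G_{33}$ \cite{Ma00}, yet precisely these cases remain open (Propositions~\ref{prop:E8} and~\ref{prop:G33}) because the representation-theoretic data needed for decomposition matrices is incomplete. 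If blocks could be read off the Schur elements, Theorem~\ref{thm:Lie} would have no excluded cases.

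There is a second gap in your ``conceptual'' cyclic case. For cyclic $W(\bL,\la)$ the block partition of the specialisation is indeed trivial to compute (two linear characters $\chi_i\colon T\mapsto\ga_ix^{a_i}$ lie in the same block iff $\ga_i\zeta_d^{a_i}=\ga_j\zeta_d^{a_j}$, Section~\ref{subsec:cyclic}), but that is only half the problem: the substance of Conjecture~\ref{conj:TX} is that the partition transported by $\Psi_{\bL,\la}$ \emph{matches} the one transported by $\Psi_{\bM,\mu}$, and your sketch offers no mechanism for this matching. The paper obtains it (Corollary~\ref{cor:cyc} via Proposition~\ref{prop:reg}) by showing that the relevant central element acts in the representation labelled by $\ga\in\Uch(G)$ by the scalar $c_{\bL,1}\la_\ga\exp\bigl(-2\pi i\delta(A_\ga+a_\ga)/(de)\bigr)$, built from invariants of the unipotent character itself and hence symmetric in the two series; this rests on the unproven conjecture \cite[Conj.~9.1]{DM14} and on regularity assumptions on $d$ and $e$, so even the cyclic case is conditional in a way your proposal does not register. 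Without some input of this kind tying Hecke-algebra blocks to invariants of unipotent characters, the two block partitions have no a priori reason to correspond under any choice of~$\Psi$.
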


This claim has been verified in \cite[Thm~3]{TX23} for groups of type $\GL_n$
when $d,e$ are coprime, and in \cite[Cor.~5 and \S8]{TX23} certain numerical
consequences are checked for exceptional type groups. Of course the conjecture
makes sense in a more general setting where $\bG$ is just assumed to be
connected reductive, but it easily reduces to the case of simple $\bG$ as
unipotent characters and Lusztig induction behave well with respect to central
products. As we will see below, the conjecture also makes sense and holds when
$F$ is not a Frobenius but a Steinberg map, so $G$ is a Suzuki or Ree group.

\subsection{Verifying the conjecture}   \label{subsec:veri}
We start out by making some general remarks.
The following observation is valid for any $\bG$:

\begin{lem}   \label{lem:def0}
 Assume that $(\bM,\mu)=(\bG,\mu)$ is a $d$-cuspidal pair of $\bG$. Then
 Conjecture~\ref{conj:TX} holds for any $e$-cuspidal pair $(\bL,\la)$,
 independently of the chosen HC-parametrisation.
\end{lem}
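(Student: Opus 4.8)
The plan is to exploit the fact that the hypothesis $(\bM,\mu)=(\bG,\mu)$ makes the $\bM$-side of Conjecture~\ref{conj:TX} degenerate. Since $\bM=\bG$, the relative Weyl group $W(\bM,\mu)=N_G(\bG,\mu)/\bG^F$ is trivial, so $\cH(W(\bM,\mu),x)$ has a unique irreducible character, $\Irr(\cH(W(\bM,\mu),x))$ is a singleton, and the specialisation $\cH(W(\bM,\mu),\ze_e)$ has exactly one block. Moreover $R_\bG^\bG(\mu)=\mu$, so the $d$-Harish-Chandra series is $\Uch(G,(\bG,\mu))=\{\mu\}$ and $\Psi_{\bM,\mu}$ sends its unique character to $\mu$, independently of the choice of $\Psi$. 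Hence for any $e$-cuspidal pair $(\bL,\la)$ and any complete HC-parametrisation the intersection appearing in Conjecture~\ref{conj:TX} is either empty or equal to $\{\mu\}$.

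If the intersection is empty (i.e.\ $\mu\notin\Uch(G,(\bL,\la))$), the empty partition satisfies the conjecture vacuously. Otherwise I would assume $\mu\in\Uch(G,(\bL,\la))$, set $\chi_0:=\Psi_{\bL,\la}^{-1}(\mu)\in\Irr(\cH(W(\bL,\la),x))$, and take the one-part partition $B_1=\{\mu\}$. The $\bM$-side condition, that $\Psi_{\bM,\mu}^{-1}(B_1)$ be a block of $\cH(W(\bM,\mu),\ze_e)$, is then automatic, since that algebra has a single block. Everything thus reduces to showing that $\{\chi_0\}=\Psi_{\bL,\la}^{-1}(B_1)$ is a block of $\cH(W(\bL,\la),\ze_d)$. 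By the standard block theory of symmetric algebras (cf.\ \cite{GM20}), $\chi_0$ forms a block on its own---a block of defect zero---precisely when its Schur element does not vanish at the specialisation, i.e.\ $S_{\chi_0}(\ze_d)\neq0$.

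To establish $S_{\chi_0}(\ze_d)\neq0$ I would pass to generic degrees. By \cite{Ma00}, formula~$(*)$ is the evaluation at $x=q$ of the polynomial identity $D_\mu(x)=\pm D_\la(x)\,|G:L|_{p'}(x)/S_{\chi_0}(x)$, where $D_\gamma$ denotes the generic degree and $|G:L|_{p'}(x)$ the corresponding polynomial. For $f\in K[x^{\pm1/m}]$ let $a_d(f)$ be its order of vanishing at $x=\ze_d$; for $f\in\ZZ[x]$ this is the multiplicity of $\Phi_d$ in $f$, and since $\ze_d\neq0$ one has $a_d(S_{\chi_0})\ge0$. Taking $a_d$ of the identity and using $a_d(|G:L|_{p'}(x))=a_d(|\bG|)-a_d(|\bL|)$, with $|\bG|,|\bL|$ the order polynomials, gives
\[
 a_d(S_{\chi_0})=a_d(D_\la)+a_d(|\bG|)-a_d(|\bL|)-a_d(D_\mu).
\]
Now $a_d(D_\la)\le a_d(|\bL|)$, as the $\Phi_d$-part of any unipotent generic degree is bounded by that of the order polynomial. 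The crucial input is that $\mu$ is $d$-cuspidal: since $\bG$ is simple, $Z(\bG)^\circ$ is trivial, so the unipotent $\Phi_d$-block of $G$ with $d$-cuspidal pair $(\bG,\mu)$ has trivial defect group, whence $D_\mu$ carries the full $\Phi_d$-part of the order polynomial, $a_d(D_\mu)=a_d(|\bG|)$ (see \cite{BMM,GM20}). Substituting these two facts yields $a_d(S_{\chi_0})\le0$, and together with $a_d(S_{\chi_0})\ge0$ this forces $a_d(S_{\chi_0})=0$, i.e.\ $S_{\chi_0}(\ze_d)\neq0$, as required.

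The argument is visibly independent of the chosen HC-parametrisation, because the displayed identity determines $a_d(S_{\chi_0})$ solely from the intrinsic generic degrees $D_\mu,D_\la$, so the conclusion holds for every $\chi_0$ with $\Psi_{\bL,\la}(\chi_0)=\mu$. The one genuine ingredient---and the step I expect to be the crux---is the characterisation of the $d$-cuspidality of $\mu$ through the maximality $a_d(D_\mu)=a_d(|\bG|)$ of the $\Phi_d$-part of its generic degree; the remainder is a formal manipulation of valuations combined with the standard fact that a character is a block of defect zero exactly when its Schur element survives the specialisation.
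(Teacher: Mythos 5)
Your proof is correct and follows essentially the same route as the paper's: the triviality of $W(\bG,\mu)$ reduces everything to showing $S_{\chi_0}(\ze_d)\neq0$, which you, like the paper, deduce by combining the polynomial form of $(*)$ with the fact that $d$-cuspidality of $\mu$ forces $D_\mu$ to carry the full $\Phi_d$-part of the order polynomial (\cite[Prop.~2.4]{BMM}) and the defect-zero criterion for symmetric algebras. The only slip is cosmetic: that last criterion should be cited from \cite[Thm~7.5.11]{GP} rather than \cite{GM20}.
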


\begin{proof}
If $(\bM,\mu)=(\bG,\mu)$, that is, if $\mu$ is a $d$-cuspidal unipotent
character of $G$, then $W(\bG,\mu)=1$, so $\Irr(\cH(W(\bM,\mu),x)=\{1\}$ is a
singleton and hence consists of a single block. It follows by
\cite[Prop.~2.4]{BMM} that $\Deg(\mu)$ is divisible by the full $\Phi_d$-part
of the order polynomial of $(\bG,F)$. Let $(\bL,\la)$ be
an $e$-cuspidal pair of $\bG$ with $\chi\in\Irr(\cH(W(\bL,\la),x))$ such that
$\Psi_{\bL,\la}(\chi)=\mu$. Then by Theorem~\ref{thm:label},
$$\Deg(\mu)=\Deg(\Psi_{\bL,\la}(\chi))=\pm\Deg(\la)|\bG:\bL|_{x'}/S_\chi$$
is divisible by the full $\Phi_d$-part of the order polynomial of $(\bG,F)$.
Now $\Deg(\la)$ divides the order polynomial of $(\bL,F)$, so the Schur element
$S_\chi$ must be prime to $\Phi_d$. That is, its specialisation $S_\chi(\ze_d)$
at $x=\ze_d$ of $S_\chi$ does not vanish. But then by general properties of
symmetric algebras \cite[Thm~7.5.11]{GP}, $\chi$ lies in a singleton block of
$\cH(W(\bL,\la),\ze_d)$, so Conjecture~\ref{conj:TX} holds for the pair
$(\bL,\la),(\bM,\mu)$.
\end{proof}

We also have a general result at the opposite extreme from cuspidal characters.
We know that $\chi,\psi\in\Irr(\cH)$ lie in the same block if and only if the
values of their central characters $\omega_\chi,\omega_\psi$ agree on all
central elements of $\cH$ \cite[Lemma 7.5.10]{GP}.  In particular, if
$\chi,\psi\in\Irr(\cH)$ lie in the same block, then the values of their central
characters $\omega_\chi,\omega_\psi$ agree on a generator of the centre of the
braid group. This defines a (possibly coarser) partition of $\Irr(\cH)$
that we call the \emph{coarse blocks}. In \cite{DM14}, Digne and Michel
conjecture a more precise formulation of the original conjecture
\cite[($d$-HV3)]{BM93} on the structure of endomorphism algebras of the
cohomology of Deligne--Lusztig varieties.

\begin{prop}   \label{prop:reg}
 Assume that $(\bL,1)$, $(\bM,1)$ are respectively $e$-cuspidal and $d$-cuspidal pairs of $\bG$.
 Then (assuming \cite[Conj.~9.1]{DM14} holds) Conjecture~\ref{conj:TX} for
 $(\bL,1)$, $(\bM,1)$ is compatible with the coarse blocks.
\end{prop}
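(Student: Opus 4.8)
The plan is to reduce both coarse-block partitions to the same intrinsic data attached to the unipotent characters in the intersection. By the central-character criterion \cite[Lemma 7.5.10]{GP} recalled above, the coarse blocks of $\cH(W(\bL,1),\ze_d)$ are exactly the fibres of the map $\chi\mapsto\omega_\chi(\pi_\bL)|_{x=\ze_d}$, where $\pi_\bL$ generates the centre $Z(B(W(\bL,1)))$ of the braid group, and likewise for $\cH(W(\bM,1),\ze_e)$ with its generator $\pi_\bM$. First I would record that in a (cyclotomic) Hecke algebra the full twist acts on $\chi$ by a monomial $\omega_\chi(\pi_\bL)=\lambda_\chi\,x^{a_\chi+A_\chi}$, with $\lambda_\chi$ a root of unity and $a_\chi,A_\chi$ the valuation and degree of the Schur element $S_\chi$; hence $\omega_\chi(\pi_\bL)|_{x=\ze_d}=\lambda_\chi\,\ze_d^{\,a_\chi+A_\chi}$.

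Next I would translate the two ingredients $\lambda_\chi$ and $a_\chi+A_\chi$ into invariants of the unipotent character $\ga:=\Psi_{\bL,1}(\chi)$. For the exponent this is immediate from Theorem~\ref{thm:label}: since $\la=1$, the factorisation of the generic degree of $\ga$ through $S_\chi$ gives $a_\ga+A_\ga=(a_\chi+A_\chi)+C_\bL$ for a constant $C_\bL$ depending only on the pair $(\bL,1)$ and not on $\chi$, where $a_\ga,A_\ga$ are the usual invariants of the generic degree of $\ga$. For the root-of-unity part I would invoke \cite[Conj.~9.1]{DM14}: it identifies $\lambda_\chi$, up to a global constant $\xi_\bL$, with the Frobenius eigenvalue $\omega_\ga$ attached to $\ga$. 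Consequently $\omega_\chi(\pi_\bL)|_{x=\ze_d}$ equals $\omega_\ga\,\ze_d^{\,a_\ga+A_\ga}$ up to the $\chi$-independent factor $\xi_\bL\,\ze_d^{-C_\bL}$, so that the coarse-block relation on the $\bL$-side reads $\ga\sim\ga'$ if and only if $\omega_\ga\,\ze_d^{a_\ga+A_\ga}=\omega_{\ga'}\,\ze_d^{a_{\ga'}+A_{\ga'}}$.

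The identical computation for $(\bM,1)$ yields the coarse-block relation $\ga\sim\ga'$ iff $\omega_\ga\,\ze_e^{a_\ga+A_\ga}=\omega_{\ga'}\,\ze_e^{a_{\ga'}+A_{\ga'}}$. Since both relations are now expressed entirely through the intrinsic invariants $\omega_\ga$ and $a_\ga+A_\ga$ of the common unipotent characters, and the bijection provided by Conjecture~\ref{conj:TX} is the identity on the intersection, the remaining task is to check that these two relations define compatible partitions of the intersection. I expect this last step to be the main obstacle: it amounts to controlling the interaction of the two specialisations $\ze_d$ and $\ze_e$ with the pair $(\omega_\ga,a_\ga+A_\ga)$, and this is precisely where the hypothesis that $\la=\mu=1$ enters, so that $\bL$ and $\bM$ are centralisers of Sylow $\Phi_e$- and $\Phi_d$-tori and the regularity of these tori can be used to constrain the $(a_\ga+A_\ga)$-values occurring in the intersection. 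The hardest single input overall is \cite[Conj.~9.1]{DM14}, which is the reason it is assumed: it is exactly what licenses replacing the a priori series-dependent root of unity $\lambda_\chi$ by the series-independent Frobenius eigenvalue $\omega_\ga$, and without this identification the two coarse-block partitions could not be compared at all.
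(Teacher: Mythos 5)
You correctly set up the reduction (coarse blocks as fibres of central characters on a central element, exponents converted to $a_\ga+A_\ga$ via Theorem~\ref{thm:label}, the root-of-unity part tied to Frobenius eigenvalues via \cite{DM14}), but the argument has a genuine gap exactly at the step you yourself flag as ``the main obstacle'', and with your normalisation that obstacle cannot be closed. Working with the full twist $\pi_\bL$, your criterion on the $\bL$-side becomes $\la_\ga\,\ze_d^{\,a_\ga+A_\ga}=\la_{\ga'}\,\ze_d^{\,a_{\ga'}+A_{\ga'}}$, while on the $\bM$-side it is the same expression with $\ze_e$ in place of $\ze_d$. These are two genuinely different functions of the pair $(\la_\ga,\,a_\ga+A_\ga)$, and nothing you invoke forces their fibres on the intersection to coincide; the appeal to ``regularity constraining the $(a_\ga+A_\ga)$-values'' is not an argument, and no such constraint is in sight. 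A second inaccuracy compounds this: \cite[Conj.~9.1]{DM14} and \cite[Cor.~9.5]{DM14} describe the action of the \emph{Frobenius} $F^\delta$, not of the full twist; since the full twist is a proper power of the Frobenius-type element, the root-of-unity part of $\omega_\chi(\pi_\bL)$ is a power of the Frobenius eigenvalue $\la_\ga$ (with the power depending on the series), not $\la_\ga$ itself up to a $\chi$-independent constant, which skews the two criteria in yet another asymmetric way. (The sign slip $a_\ga+A_\ga=(a_\chi+A_\chi)+C_\bL$ versus $C_\bL-(a_\chi+A_\chi)$ is harmless, since $t\mapsto\ze^t$ and $t\mapsto\ze^{-t}$ have the same fibres.)

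The paper's proof succeeds precisely by choosing the central element with the right normalisation. By \cite[Cor.~9.5]{DM14}, the central element of $\cH(W(\bL,1),x)$ corresponding to $F^\delta$ acts in the representation labelled by $\Psi_{\bL,1}(\ga)$ by $c_{\bL,1}\la_\ga\, x^{-\delta(A_\ga+a_\ga)/e}$ --- note the exponent carries a factor $1/e$, because this element is essentially an $e$-th root of the full twist. Specialising at $x=\ze_d$ yields $c_{\bL,1}\la_\ga\exp\bigl(-2\pi i\delta(A_\ga+a_\ga)/(de)\bigr)$, and the identical computation on the $\bM$-side (exponent divided by $d$, specialisation at $\ze_e$) yields $c_{\bM,1}\la_\ga\exp\bigl(-2\pi i\delta(A_\ga+a_\ga)/(de)\bigr)$. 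The quantity $\la_\ga\exp\bigl(-2\pi i\delta(A_\ga+a_\ga)/(de)\bigr)$ is manifestly symmetric in $d$ and $e$, and the constants $c_{\bL,1}$, $c_{\bM,1}$ are constant along each series, so the two fibre partitions agree on the nose. So the missing idea is not a further lemma about $a_\ga+A_\ga$: it is that one must compare central characters on the Frobenius-type generators (whose exponents carry $1/e$, respectively $1/d$) rather than on the full twists; once that substitution is made, the comparison you were unable to complete becomes a one-line identity.
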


\begin{proof}
Let $\delta\ge1$ be minimal such that $F^\delta$ acts trivially on the Weyl
group of $\bG$.
By \cite[Cor.~9.5]{DM14}, assuming \cite[Conj.~9.1]{DM14}, for any $\ga$ in
the $e$-Harish-Chandra series of $\bG^F$ above $(\bL,1)$ the Frobenius
$F^\delta$ acts by a scalar
$$c_{\bL,1}\la_\ga q^{-\delta(A_\ga+a_\ga)/e}$$
on the $\ga$-isotypic part of a suitable Deligne--Lusztig variety attached
to $(\bL,1)$, where $A_\ga$ is the degree in $x$ of the degree
polynomial of~$\ga$, $a_\ga$ is the power of $x$ dividing it, and $\la_\ga$ is
the Frobenius eigenvalue of $\ga$ and $c_{\bL,1}$ is a constant only depending
on $(\bL,1)$. Thus the corresponding central element of the cyclotomic Hecke
algebra $\cH(W(\bL,1),x)$ acts by
$$c_{\bL,1}\la_\ga x^{-\delta(A_\ga+a_\ga)/e}$$
in the irreducible representation labelled by $\Psi_{\bL,1}(\ga)$, and hence
by the scalar
$$c_{\bL,1}\la_\ga \exp\big(-2\pi i\delta(A_\ga+a_\ga)/(de)\big)$$
in the specialisation $\cH_e:=\cH(W(\bL,1),\ze_d)$. Comparing with the
corresponding scalars for the characters in the $d$-Harish-Chandra series
above $(\bM,1)$ we see that two characters of $\cH_e$ lie in the same coarse
block if and only if their images under $\Psi_{\bM,1}^{-1}\circ\Psi_{\bL,1}$
lie in the same coarse block for~$\cH_d:=\cH(W(\bM,1),\ze_e)$.
\end{proof}

This applies in particular if $d$ is a \emph{regular number} for $W$
(see e.g.\ \cite[Def.~3.1]{BM97}), and $\bL,\bM$ are the centralisers of respectively
Sylow $e$-tori and Sylow  $d$-tori of $\bG$.

\subsection{The cyclic case}   \label{subsec:cyclic}
Assume $W(\bL,\la)$ is cyclic, of order~$m$. Then
$\cH(W(\bL,\la),x)$ is generated by a single element $T$ satisfying a relation
$\prod_{i=1}^m(T-\ga_ix^{a_i})=0$ for suitable roots of unity~$\ga_i$
and half-integral exponents $a_i$ (see \cite[\S2]{BM93}). Thus, all irreducible
characters of $\cH(W(\bL,\la),x)$ (over a splitting field) are linear, and
there are exactly $m$ of them, defined by
$$\chi_i:T\mapsto\ga_ix^{a_i},\qquad i=1,\ldots,m.\eqno{(\dagger)}$$
In the specialisation $\cH(W(\bL,\la),\ze_d)$, two characters $\chi_i,\chi_j$
lie in the same block if and only if their specialisations agree, so if and
only if
$\ga_i\ze_d^{a_i}=\ga_j\ze_d^{a_j}$. Hence in this case the determination
of blocks is immediate from the parameters defining the cyclotomic Hecke
algebra.

\begin{cor}   \label{cor:cyc}
 Assume $d,e$ are both regular.
 Let $(\bL,1)$, $(\bM,1)$ be respectively $e$-cuspidal and  $d$-cuspidal pairs such that
 both relative Weyl groups $W(\bL,1)$ and $W(\bM,1)$ are cyclic. Then
 (assuming \cite[($d$-HV3)]{BM93} holds) Conjecture~\ref{conj:TX} holds for the
 intersection of the corresponding two Harish-Chandra series.
\end{cor}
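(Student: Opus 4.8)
The plan is to combine the structural results established just above with the explicit description of blocks in the cyclic case. Since both $d$ and $e$ are regular numbers and $\bL,\bM$ are taken to be centralisers of Sylow tori with trivial cuspidal unipotent character, Proposition~\ref{prop:reg} applies (under \cite[Conj.~9.1]{DM14}, which in turn follows from \cite[($d$-HV3)]{BM93}) and tells us that Conjecture~\ref{conj:TX} holds \emph{at the level of coarse blocks}: two characters of $\cH_e=\cH(W(\bL,1),\ze_d)$ lie in the same coarse block if and only if their images under $\Psi_{\bM,1}^{-1}\circ\Psi_{\bL,1}$ lie in the same coarse block of $\cH_d=\cH(W(\bM,1),\ze_e)$. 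So the matching of coarse-block partitions on the intersection of the two Harish-Chandra series is already in hand. What remains is to upgrade ``coarse block'' to ``block'' on both sides.

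First I would invoke the cyclic hypothesis to show that for these algebras coarse blocks and actual blocks coincide. By the discussion in Subsection~\ref{subsec:cyclic}, since $W(\bL,1)$ is cyclic of order $m$ the algebra $\cH(W(\bL,1),x)$ is generated by a single $T$ with all irreducible characters linear, given by $\chi_i\co T\mapsto\ga_ix^{a_i}$. In the specialisation at $\ze_d$, two characters $\chi_i,\chi_j$ lie in the same block exactly when $\ga_i\ze_d^{a_i}=\ga_j\ze_d^{a_j}$, i.e.\ exactly when the generator $T$ acts by the same scalar in both. But $T$ (or an appropriate power of it) is precisely the image of a generator of the centre of the braid group in the cyclic case, so the central character is determined by this single scalar. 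Hence the condition defining the coarse-block partition and the condition defining the genuine block partition agree, and for cyclic $W(\bL,1)$ the coarse blocks \emph{are} the blocks of $\cH_e$; the identical argument applies to $\cH_d$.

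With both refinements established, I would conclude by transport along $\Psi_{\bM,1}^{-1}\circ\Psi_{\bL,1}$: the coarse-block bijection from Proposition~\ref{prop:reg} becomes a genuine-block bijection, so on the intersection of the two Harish-Chandra series the common partition $\bigsqcup B_i$ pulls back to blocks of $\cH_e$ on one side and to blocks of $\cH_d$ on the other, which is exactly the assertion of Conjecture~\ref{conj:TX} for the pair $(\bL,1),(\bM,1)$.

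The main obstacle I anticipate is the middle step, namely verifying cleanly that for a cyclic relative Weyl group the single generating scalar of the central character really does separate blocks finely enough that coarse blocks equal blocks. One must check that the generator of the centre of the braid group maps to $T$ (or a fixed power of $T$) in the cyclic-rank-one situation, so that agreement of the braid-centre scalar is equivalent to agreement of $\ga_i\ze_d^{a_i}$; the regularity of $d$ and $e$ is what guarantees that $\bL$ and $\bM$ are Sylow-torus centralisers so that Proposition~\ref{prop:reg} is applicable, and the role of \cite[($d$-HV3)]{BM93} is to secure \cite[Conj.~9.1]{DM14}. Everything else is a short formal deduction.
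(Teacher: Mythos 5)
Your proof is correct and takes essentially the same route as the paper: Proposition~\ref{prop:reg} supplies the coarse-block compatibility, and the cyclic hypothesis is used to upgrade coarse blocks to genuine blocks on both sides. The only difference is cosmetic, in how that upgrade is justified: the paper cites the remark after \cite[Lemma~5.11]{BM97} that $\cH$ is generated by the central element $F$, whereas you argue that the single Hecke generator $T$ is the image of the generator of the centre of the braid group --- which does hold here, since for a rank-one cyclic group the braid group is infinite cyclic and its centre is generated by the braid generator itself, so your hedge ``or an appropriate power of $T$'' resolves to the first power, exactly as your argument requires.
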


\begin{proof}
By the remark after \cite[Lemma~5.11]{BM97} in this case $\cH$ is generated
by the element $F$ and hence by what we observed just above, coarse blocks
agree with blocks, so the claim follows by Proposition~\ref{prop:reg}.
\end{proof}

A general formula for values of characters of cyclotomic Hecke algebras on
central elements is given in \cite[Prop.~6.15]{BMM99}.

\subsection{Ennola duality}   \label{subsec:Ennola}
There is a further simplification coming from \emph{Ennola
duality} (see \cite[(1.5), (1.21), Thm~3.2]{BMM}). For integers $e\ge1$ let
$$e':=\begin{cases} 2e& \text{if $e\equiv1\pmod2$},\\
        e/2& \text{if $e\equiv2\pmod4$},\\
        e& \text{if $e\equiv0\pmod4$}.\end{cases}$$
Then we have $\Phi_e(-x)=\pm\Phi_{e'}(x)$ (where the minus sign occurs only when
$e\in\{1,2\}$). Now associated to $(\bG,F)$ is its Ennola dual group $(\bG,F')$
whose complete root datum is obtained by formally replacing $F$ by~$-F$.
We have the following:

\begin{lem}   \label{lem:Ennola}
 Let $\bG$ be as above and $d,e\ge1$.
 \begin{enumerate}[\rm(a)]
  \item If Conjecture~\ref{conj:TX} holds for $(\bG,F)$ at $(d,e)$ it also holds
   for $(\bG,F')$ at $(d',e')$.
  \item If $W$ contains $-\Id$ then Conjecture~\ref{conj:TX} holds for $(e,e')$
   for all $e\ge1$ with respect to a suitable complete HC-parametrisation.
 \end{enumerate}
\end{lem}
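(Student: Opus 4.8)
The plan is to exploit the arithmetic relation $\Phi_e(-x)=\pm\Phi_{e'}(x)$ to set up a dictionary between the Ennola-dual data, and then transport block partitions through this dictionary. The starting point is the fact, recorded in \cite[Thm~3.2]{BMM} and the surrounding discussion, that Ennola duality furnishes, for each $e$-cuspidal pair $(\bL,\la)$ of $(\bG,F)$, a corresponding $e'$-cuspidal pair of the Ennola-dual group $(\bG,F')$ with the ``same'' relative Weyl group and an associated cyclotomic Hecke algebra whose parameters are obtained from those of $\cH(W(\bL,\la),x)$ by the substitution $x\mapsto -x$ (up to a controlled sign). So for part~(a) I would first fix a complete HC-parametrisation $\Psi$ for $(\bG,F)$ witnessing the conjecture at $(d,e)$, and use Ennola duality to produce from it a complete HC-parametrisation $\Psi'$ for $(\bG,F')$. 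The key observation is then that specialising $\cH(W(\bL,\la),x)$ at $x=\ze_d$ corresponds, under the substitution $x\mapsto -x$, to specialising the Ennola-dual algebra at $x=-\ze_d$; and since $-\ze_d$ is a primitive $d'$-th root of unity (this is precisely the content of the definition of $e'$ applied to $d$), the block partition of $\cH(W(\bL,\la),\ze_d)$ matches the block partition of the dual algebra at $\ze_{d'}$. The same matching holds with the roles of $d$ and $e$ interchanged, so the two compatible block partitions furnishing the conjecture at $(d,e)$ transport verbatim to compatible block partitions at $(d',e')$.

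For part~(b) the idea is that when $-\Id\in W$, the group $\bG$ is (up to the technicalities of isogeny type) its own Ennola dual, so we are no longer comparing two different groups but rather comparing the same group's data at $e$ and at $e'$. Concretely, I would identify the $e$-cuspidal pairs of $(\bG,F)$ with the $e'$-cuspidal pairs of $(\bG,F')\cong(\bG,F)$, so that the cyclotomic algebra $\cH(W(\bL,\la),x)$ at the pair on one side and its Ennola partner on the other differ only by $x\mapsto -x$. Then taking $d=e$ in the machinery of part~(a) (applied self-referentially), the statement that block partitions at $\ze_e$ and $\ze_{e'}=-\ze_e$ match becomes essentially tautological: the two algebras being compared are related by $x\mapsto-x$, and specialising the first at $\ze_e$ is the specialisation of the second at $-\ze_e=\ze_{e'}$. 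One then checks that the resulting partition is simultaneously a block partition on both sides, which is automatic because the underlying linear-algebra block structure is preserved under the invertible change of variable $x\mapsto-x$.

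The step I expect to be the main obstacle is the bookkeeping of the sign and of the half-integral exponents. The substitution $x\mapsto -x$ is only well defined on $K[x^{\pm 1}]$; on $K[x^{\pm\frac12}]$ or the $K[x^{\pm\frac1m}]$ where Schur elements and the parameters $\ga_i x^{a_i}$ actually live, the square root of $-1$ intervenes, and one must verify that the chosen roots of unity $\ga_i$ transform consistently so that $\ga_i\ze_d^{a_i}=\ga_j\ze_d^{a_j}$ on one side is genuinely equivalent to the corresponding coincidence on the Ennola-dual side. This is exactly where the clause ``the minus sign occurs only when $e\in\{1,2\}$'' and the precise definition of $e'$ must be used to guarantee that $-\ze_d$ is a \emph{primitive} $d'$-th root of unity and that no spurious sign spoils the matching. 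A secondary, more routine, obstacle is confirming that the Ennola correspondence on cuspidal pairs is compatible with the bijections $\Psi_{\bL,\la}$ of Theorem~\ref{thm:label}, i.e.\ that condition~$(*)$ transports correctly; this follows because the degrees $\Psi_{\bL,\la}(\chi)(1)$ and the Schur elements are polynomials in~$q$ whose behaviour under $q\mapsto -q$ is governed by exactly the Ennola symmetry recorded in \cite[Thm~3.2]{BMM}.
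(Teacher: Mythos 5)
Your proposal is correct and follows essentially the same route as the paper's own proof: both parts rest on the Ennola correspondence of cuspidal pairs with equal relative Weyl groups, the fact that the associated cyclotomic specialisations are related by $x\mapsto -x$ (which the paper attributes to the conjectural framework of \cite{BM93}), and the observation that $-\ze_d$ is a primitive $d'$th root of unity, so that the two specialised algebras being compared literally coincide. Your closing remarks on sign bookkeeping and on transporting condition~$(*)$ correspond to points the paper likewise treats as part of the Ennola formalism, so no substantive divergence arises.
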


\begin{proof}
The unipotent characters of the finite reductive group $\bG^{F'}$ are in
bijection with those of $\bG^F$, where the degree functions are changed by
replacing $x$ by $-x$ \cite[Thm~3.3]{BMM}. Further, for any $F$-stable maximal
torus $\bT$ of $\bG$
there exists an $F'$-stable maximal torus $\bT'$ of $\bG$ whose order
polynomial is obtained from the one of $\bT$ by replacing $x$ by $-x$. Also,
the relative Weyl groups of $\bT$ and $\bT'$ agree and thus so do their generic
Hecke algebras. More generally, the $e$-cuspidal pairs of $(\bG,F)$ are in
bijection with the $e'$-cuspidal pairs of $(\bG,F')$, with equal relative Weyl
groups and thus Hecke algebras. It is part of the conjectures in \cite{BM93}
that the cyclotomic specialisations for cuspidal pairs of $(\bG,F')$ are
obtained from the ones for $(\bG,F)$ by replacing $x$ by $-x$. Then clearly the
$\ze_d$-blocks of the former are mapped to $\ze_{d'}$-blocks of the latter. So
if Conjecture~\ref{conj:TX} holds for $(\bG,F)$ and $(d,e)$, it will
automatically hold for $(\bG,F')$ and $(d',e')$.
\par
If $W$ contains $-\Id$ then $(\bG,F)$ and $(\bG,F')$ are isomorphic and hence
their sets of unipotent characters, cuspidal pairs and relative Weyl groups
may be identified. Let $(\bL,\la)$ be an $e$-cuspidal pair of $(\bG,F)$ with
cyclotomic Hecke algebra $\cH:=\cH(W(\bL,\la),x)$. Then clearly, the
specialisation of $\cH$ at $x$ a primitive $e'$th root of unity is identical to
the specialisation of $\cH$ at $-x$ a suitable primitive $e$th root of unity,
by the definition of $e'$. Hence, the blocks of both are the same and choosing
a suitable HC-parametrisation the claim follows.
\end{proof}

\subsection{Algorithmic determination of blocks}   \label{subsec:algo}
We now describe how we determined blocks of specialised Hecke algebras. Let
$W$ be a complex reflection group and let $\cH(W)$ denote a cyclotomic
Hecke algebra over $R:=\CC[x,x^{-1}]$, where $y^z=x$ with $z=|Z(W)|$.
It is known that $\Frac(R)$ is a splitting field for $\cH(W)$. A
representation $(V,\rho)$ of $\cH(W)$ is a $\cH(W)$-module $V$ which is
finite-dimensional free as $R$-module, with structural morphism
$\rho:\cH(W)\to\End(V)$.
Let $\ze_e\in\CC$ be a primitive $e$th root of unity and let $\ze\in\CC$ with
$\ze^z=\ze_e$. We consider the ring homomorphism $\tht:R\to\CC$ defined by
$\tht(y)=\ze$, so $\tht(x)=\ze_e$, and let $\cH_e(W)$ denote the corresponding
specialised Hecke algebra over $\CC$ (note that this depends on the choice of
$\ze$). If $\chi\in\Irr(W)$, then we denote by $S_\chi$ its Schur element in
$\cH(W)$ and we call defect of $\chi$ with respect to $\tht$ the multiplicity
of $\ze$ as a root of $S_\chi$.

If $W$ is a cyclic group, then the blocks of $\cH_e(W)$ are easy to compute,
because two characters $\chi,\psi$ lie in the same block if and only if
$\tht(\chi(h))=\tht(\psi(h))$ for all $h \in \cH(W)$, see
Section~\ref{subsec:cyclic} above.

The blocks of $\cH_e(W)$ for the exceptional finite Coxeter groups $W$ can be
found in
\cite[App.~F]{GP}, while for many cases of groups of rank 2 they have
been calculated in \cite{CM11}. The algorithm used in the latter reference is
similar to the one we applied to the rest of the cases. This algorithm is
heuristic and it demands that we apply all of the following arguments to
determine mostly when two characters do not lie in the same block. In most
instances, we explicitly computed the decomposition matrix
$D=(d_{\chi,\vhi})_{\chi\in\Irr(W),\vhi\in\Irr(\cH_e(W))}$. For $G_{25}$ a
couple of cases were taken from \cite{Pl14}.

\begin{enumerate}[(1)]
\item A character $\chi$ is alone in its block if and only if
 $\tht(S_\chi) \neq 0$ (\cite[Thm~7.5.11]{GP} and \cite[Prop.~4.4]{GR}).
\item Any representation of dimension $1$ is irreducible. A representation of
 dimension~$2$ is irreducible, unless it has a 1-dimensional subrepresentation.
 A representation of dimension~$3$ is irreducible unless it or its transpose
 has a 1-dimensional subrepresentation. Dimension $1$ subreprepresentations
 correspond to common eigenvectors of the matrices representing the generators
 of $\cH(W)$, so they are easy to find.
\item In general, assume that $(V,\rho)$, $(V',\rho')$ are simple modules of
 $\cH(W)$. Then $(V,\tht\circ\rho)$, $(V',\tht\circ\rho')$ are modules of the
 specialised algebra $\cH_e(W)$ (where by $\tht\circ\rho$ we mean that we apply
 $\tht$ to the entries of the matrix $\rho(h)$ for all $h\in\cH(W)$).
 If $(V',\tht\circ\rho')$ is isomorphic to a submodule of $(V,\tht\circ\rho)$,
 then there exists a (rectangular) matrix $M$ such that
 $(\tht\circ\rho'(s)) \cdot M = M \cdot (\tht\circ\rho(s))$ for every generator
 $s$ of $\cH(W)$ \cite{Pl14}. We only used this argument for the groups
 $G_{25}$ and $G_{26}$.
\item We have that $\chi,\psi$ are in the same block if and only if
 $\tht(\omega_\chi(z))=\tht(\omega_\psi(z))$ for all $z\in Z(\cH(W))$, where
 $\omega_\chi$ denotes the central character of $\chi$ \cite[Lemma 7.5.10]{GP}.
 A known element of $ Z(\cH(W))$ is the generator $\bbe$ of the centre of the
 corresponding braid group (proved to be cyclic by Bessis
 \cite[Thm~12.8]{Bes15}). So if
 $\tht(\omega_\chi(\bbe))\neq \tht(\omega_\psi(\bbe))$, then $\chi,\psi$ are
 not in the same block.
\item Let $\vhi\in\Irr(\cH_e(W))$ and let $P(x)$ be a polynomial that is
 divisible by $S_\chi$ for all $\chi$ such that $d_{\chi,\vhi}\neq 0$. Then
 (see \cite[Lemma 6]{CM11})
 $$\sum_{\chi \in \Irr(W)} d_{\chi,\vhi} \cdot \tht({P(x)}/{S_\chi})=0.$$
 The fraction ${P(x)}/{S_\chi}$ could simply be the generic degree $D_\chi$,
 but when a character is not of maximal defect in $\cH(W)$ (that is,
 $\tht(D_\chi)=0$), it is convenient to use the least common multiple of the
 Schur elements of the characters in it.
\end{enumerate}

Using the above arguments we were able to explicitly compute the blocks in all
cases except for the groups $G_{31}, G_{32}$ (which occur for $d=4$,
respectively $d=3,6$ in $E_8$) and for $G_{33}$. In these three cases,
using the above algorithm, we were able to split the characters into
``approximate blocks'', but these could still be unions of blocks
(our algorithm determining when characters are not in the same block, but not
when they are).  Note that there is also data missing regarding the
representations of the corresponding Hecke algebras, which made our job more
difficult, especially for $G_{31}$.

\begin{rem}
After numerous calculations of blocks for cyclotomic Hecke algebras, it is
worth pointing out the following facts:
\begin{enumerate}[(1)]
\item We verified that the ``defect conjecture'' \cite[Conj.~2]{CJ23} holds in
  every case, that is, all characters in a block have the same defect. 
\item The irreducible representations of $\cH_e(W)$ are liftable for all
  rank $2$ groups (this is what is called the existence of an optimal basic set
  in \cite{CM11}), but not for rank $3$ or higher.
\item In most of the cases, the converse of the last statement of (4) above
  holds, that is, if $\tht(\omega_\chi(\bbe))= \tht(\omega_\psi(\bbe))$, then
  $\chi,\psi$ are in the same block. However, we have found a few exceptions
  (even in rank $2$ groups), so this does not hold in general. (Examples are
  given by $\cH(G_5)$ with parameters $1,x,x^2$ for $x=\zeta_4$, and
  $\cH(G_{22})$ with parameters $1,x^2$ for $ x=\zeta_5$.)
\end{enumerate}
\end{rem}
All blocks we computed can be found in the github repository \cite{git}.

\section{Blocks in the exceptional types}   \label{sec:Lie}

We now discuss Conjecture~\ref{conj:TX} for the various types of exceptional
groups, by increasing rank. Our notation for unipotent characters follows
\cite{Ca} which is also the one used in the computer algebra package \Chevie
\cite{Mi15} from which all data are taken.

Note that we do not (need to) consider groups of type $\tw2E_6$, since these
are obtained from the untwisted groups $E_6$ by Ennola duality (see
Lemma~\ref{lem:Ennola} above).
Since Conjecture~\ref{conj:TX} is symmetric in $d$ and $e$ and moreover
vacuously satisfied when $d=e$, we may and will assume from now on that $d< e$,
and also that both $\bL,\bM$ are proper Levi subgroups of $\bG$
(by Lemma~\ref{lem:def0}).
\smallskip

It is our understanding that in \cite[\S8]{TX23} it is shown for types
$G_2$ and $F_4$ that the block sizes of the specialised cyclotomic algebras do
match up as predicted by Conjecture~\ref{conj:TX}, but without making explicit
a choice of HC-parametrisation and (hence) also without checking that the images
of the blocks under $\Psi_{\bL,\la},\Psi_{\bM,\mu}$ do actually agree. We check
this latter property using the data and the functionality of the
\Chevie-system \cite{Mi15}.

\subsection{Type $G_2$}   \label{subsec:G2}
We start with type $G_2$.
The block sizes for the relevant algebras have already been determined in
\cite[8.6]{TX23}. Here we also consider the intersection of the corresponding
sets of unipotent characters. There are two pairs of  characters of
$G_2(q)$ not distinguished by their degrees, namely the principal series
characters $\ph{1,3}',\ph{1,3}''$ and the cuspidal characters
$G_2[\tht],G_2[\tht^2]$ of $G_2(q)$, where as customary $\tht$ denotes a
primitive third root of unity. For these, there exist several possible
Harish-Chandra parametrisations. Now it turns out that $\ph{1,3}',\ph{1,3}''$ in
all cases either
lie in the same block or they both occur in a singleton, so for the verification
of Conjecture~\ref{conj:TX} no ambiguity arises. On the other hand, for
$G_2[\tht],G_2[\tht^2]$ a parametrisation via endomorpism algebras is fixed
by \cite[(7.3)]{Lu76} for $d=6$. Hence the only relevant choice is for $d=3$.
We present a compatible HC-parametrisation in Table~\ref{tab:G2}. Here the
characters of $\cH(W(G_2),x)$ are identified through their standard \Chevie-names, the (linear) character for cyclotomic Hecke algebras for cyclic
groups by the corresponding parameter as in~$(\dagger)$. 
With this, we have:

\begin{prop}   \label{prop:G2}
 Conjecture~\ref{conj:TX} holds for type $G_2$ with respect to some complete
 HC-parametrisation. The block intersections are displayed in
 Table~\ref{tab:G2} (up to Ennola duality).
\end{prop}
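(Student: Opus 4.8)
The plan is to verify Conjecture~\ref{conj:TX} for type $G_2$ by explicit computation, exhausting the finitely many pairs $(d,e)$ with $d<e$ drawn from the relevant list $\{1,2,3,6\}$ and, for each such pair, comparing the block partitions induced on the two Harish-Chandra series after transporting them into $\Uch(G_2(q))$ via a chosen complete HC-parametrisation $\Psi$. First I would enumerate, for each admissible $e$, all $e$-cuspidal pairs $(\bL,\la)$ of $\bG$ of type $G_2$, reading off from \Chevie\ (following \cite{Ca,Mi15}) the relative Weyl group $W(\bL,\la)$, its cyclotomic Hecke algebra $\cH(W(\bL,\la),x)$, the associated set $\Irr(\cH(W(\bL,\la),x))$, and the Harish-Chandra series $\Uch(G_2,(\bL,\la))$. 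By Lemma~\ref{lem:def0} I may discard any pair with $\bL=\bG$ (the cuspidal case), and by Ennola duality (Lemma~\ref{lem:Ennola}) I may restrict to one representative per Ennola-orbit of pairs $(d,e)$, which is why the table is stated ``up to Ennola duality''.

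Next, for each surviving pair of cuspidal data I would compute the actual block partitions. Since the relevant relative Weyl groups in $G_2$ are small---typically cyclic or the dihedral group $W(G_2)$ itself---the block computations are governed by the tools of Section~\ref{subsec:algo}: in the cyclic case the blocks of $\cH(W(\bL,\la),\ze_d)$ are read off directly from the defining parameters via $(\dagger)$, two linear characters $\chi_i,\chi_j$ coinciding after specialisation exactly when $\ga_i\ze_d^{a_i}=\ga_j\ze_d^{a_j}$; in the remaining cases I would fall back on the Schur-element criterion~(1) and the central-character criterion~(4) to determine which characters are forced apart, together with the decomposition-matrix data for $\cH(W(G_2),x)$ available from \cite[App.~F]{GP}. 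Having both block partitions in hand, I would then exhibit a single complete HC-parametrisation $\Psi$ under which every block on the $\cH(W(\bL,\la),\ze_d)$ side and every block on the $\cH(W(\bM,\mu),\ze_e)$ side match up across their common intersection in $\Uch(G_2)$, recording the outcome in Table~\ref{tab:G2}.

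The one genuinely delicate point is the compatible \emph{choice} of HC-parametrisation, not the block arithmetic. Two pairs of unipotent characters of $G_2(q)$ are not separated by their degrees---the principal-series pair $\ph{1,3}',\ph{1,3}''$ and the cuspidal pair $G_2[\tht],G_2[\tht^2]$---so condition~$(*)$ of Theorem~\ref{thm:label} does not pin down $\Psi_{\bL,\la}$ on these, and a naive labelling could destroy the block matching. I would resolve the cuspidal pair $G_2[\tht],G_2[\tht^2]$ for $d=6$ by invoking Lusztig's parametrisation via endomorphism algebras \cite[(7.3)]{Lu76}, which fixes the labelling there and leaves $d=3$ as the only free choice to be made by hand. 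For $\ph{1,3}',\ph{1,3}''$ I would observe---as noted just before the proposition---that in every relevant specialisation these two characters either lie together in a common block or each sits in a singleton, so no ambiguity can arise regardless of how they are labelled.

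The main obstacle I anticipate is therefore bookkeeping rather than mathematics: making sure that the \emph{same} parametrisation $\Psi$ simultaneously produces matching blocks for \emph{all} the competing pairs $(d,e)$, rather than curing one intersection at the cost of breaking another. Because $G_2$ is small this is readily checked, and the consistency of the exhibited $\Psi$ across the whole table is precisely the content of Table~\ref{tab:G2}; once that single table is verified against the \Chevie-generated block data, the proposition follows.
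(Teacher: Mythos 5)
Your proposal is correct and follows essentially the same route as the paper: direct computation using the known cyclotomic parameters, the block data for cyclic relative Weyl groups via $(\dagger)$ and for $W(G_2)$ via published decomposition matrices (the paper cites \cite[Tab.~7.3]{GJ11} rather than \cite[App.~F]{GP}, an immaterial difference), combined with the observation that the only labelling ambiguities are $\ph{1,3}',\ph{1,3}''$ (harmless, since they are always together or both singletons) and $G_2[\tht],G_2[\tht^2]$ (fixed for $d=6$ by \cite[(7.3)]{Lu76}, leaving only $d=3$ as a free choice). The paper packages the parametrisation discussion before the proposition and leaves the rest to ``direct computation,'' which is exactly the content of your plan.
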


\begin{table}[htb]
\caption{Block intersections for $G_2$}   \label{tab:G2}
$\begin{array}{c|cc}
 (d,e)& \text{intersections}\\
\hline
 (1,2)& \{\ph{1,0},\ph{1,3}',\ph{1,3}'',\ph{1,6}\}\\
 (1,3)& \{\ph{1,0},\ph{1,6},\ph{2,2}\}\\
 (1,6)& \{\ph{1,0},\ph{1,6},\ph{2,1}\}\\
 (3,6)& \{\ph{1,0},G_2[\tht]\},\{\ph{1,6},G_2[\tht^2]\}\\
\end{array}$
\smallskip

Here, we have omitted all singleton blocks.
\end{table}

\begin{proof}
The relative Weyl groups are cyclic or of type $G_2$. The parameters for the
cyclotomic Hecke algebra are known by the work of Lusztig or predicted in
\cite[Bem.~2.7]{BM93} (see also the table compiled in \cite[8.6]{TX23}). The
decomposition matrices
for specialisations of $\cH(W(G_2),x)$ are given in \cite[Tab.~7.3]{GJ11}, the
ones for the cyclic case were described in Section~\ref{subsec:veri}. The claim follows by direct
computation.
\end{proof}

\subsection{Type $\tw3D_4$}   \label{subsec:3D4}
All unipotent characters of the triality groups $\tw3D_4(q)$ are uniquely
determined by their degree polynomial and hence $(*)$ determines a unique
complete HC-parametrisation.

\begin{prop}   \label{prop:3D4}
 Conjecture~\ref{conj:TX} holds for type $\tw3D_4$; the block intersections are
 displayed in Table~\ref{tab:3D4} (up to Ennola duality).
\end{prop}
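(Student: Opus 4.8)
The plan is to follow exactly the computational strategy already laid out for type $G_2$ in Proposition~\ref{prop:G2}, now applied to the triality groups $\tw3D_4(q)$. The crucial simplification, stated in the sentence immediately preceding the proposition, is that every unipotent character of $\tw3D_4(q)$ is uniquely determined by its degree polynomial; by Theorem~\ref{thm:label}, condition~$(*)$ therefore pins down each bijection $\Psi_{\bL,\la}$ uniquely. This removes the subtle point that plagued $G_2$ (where $\ph{1,3}',\ph{1,3}''$ and $G_2[\tht],G_2[\tht^2]$ were not separated by degrees), so here there is no freedom in the HC-parametrisation to be exploited or checked: the parametrisation is forced, and we need only verify that the conjecture's block-matching holds for this one canonical choice.

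Concretely, I would first extract from \Chevie\ \cite{Mi15} the list of relevant $e$-cuspidal pairs $(\bL,\la)$ of $\tw3D_4$ for each $e$ dividing the order polynomial (the values $e\in\{1,2,3,6,12\}$), together with their relative Weyl groups $W(\bL,\la)$. By inspection these groups are either cyclic or the small dihedral/complex reflection groups already handled, so their generic Hecke algebras and the associated cyclotomic specialisation data (the roots of unity $\ga_i$ and exponents $a_i$ as in~$(\dagger)$, or the Schur elements for the $G_2$-type factors) are all known from the work of Lusztig and the predictions of \cite{BM93}. Next, for each admissible pair $(d,e)$ with $d<e$ and both $\bL,\bM$ proper (invoking Lemma~\ref{lem:def0} to discard the cuspidal-in-$\bG$ cases), I would compute the blocks of $\cH(W(\bL,\la),\ze_d)$ and of $\cH(W(\bM,\mu),\ze_e)$: for cyclic relative Weyl groups using the explicit criterion $\ga_i\ze_d^{a_i}=\ga_j\ze_d^{a_j}$ of Section~\ref{subsec:cyclic}, and for the remaining types using the decomposition-matrix computations described in Section~\ref{subsec:algo}. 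Transporting these block partitions through the (now unique) maps $\Psi_{\bL,\la}$ and $\Psi_{\bM,\mu}$ into $\Uch(\tw3D_4(q))$, I would intersect the two Harish-Chandra series and check that the resulting partition of the intersection simultaneously refines to blocks on both sides, exactly as Conjecture~\ref{conj:TX} demands.

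Since the statement promises the outcome is recorded in Table~\ref{tab:3D4} up to Ennola duality, I would also use Lemma~\ref{lem:Ennola} to halve the work: because $W(\tw3D_4)$ (a group of type $G_2$ as a reflection group) contains $-\Id$, part~(b) of that lemma lets me deduce the $(e,e')$ cases for free from a suitable HC-parametrisation, and part~(a) lets me fold together Ennola-dual pairs $(d,e)\leftrightarrow(d',e')$, so only one representative of each duality class need be computed by hand. The genuine content of the proof is then a finite, completely explicit verification, and the write-up reduces to the same two sentences used for $G_2$: cite the sources for the parameters and decomposition matrices, assert that the blocks were computed by the methods of Section~\ref{subsec:algo}, and conclude that the claim follows by direct computation with the tabulated result. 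The main obstacle I anticipate is not conceptual but organisational: one must be careful that the Ennola reduction in Lemma~\ref{lem:Ennola} is applied consistently with the labelling conventions of \cite{Ca} and \Chevie, so that the ``up to Ennola duality'' qualifier in the table is honestly justified and no $(d,e)$ pair is silently omitted from the case analysis.
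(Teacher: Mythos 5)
Your proposal is correct and follows essentially the same route as the paper: exploit the fact that degree polynomials separate all unipotent characters of $\tw3D_4(q)$ (so $(*)$ forces a unique complete HC-parametrisation), take the relative Weyl groups and cyclotomic Hecke parameters from \cite{BMM} and \cite{BM93}, determine the blocks of the specialisations (cyclic case by the criterion of Section~\ref{subsec:cyclic}, the non-cyclic cases $W(G_2)$ and $G_4$ from the known tables in \cite{GJ11} and \cite{CM11}), and verify the block-matching by direct computation, using Ennola duality to cut down the list of pairs $(d,e)$. The only cosmetic difference is that the paper cites the pre-existing block decompositions for $W(G_2)$ and $G_4$ rather than recomputing them via the algorithm of Section~\ref{subsec:algo}.
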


\begin{table}[htb]
\caption{Block intersections for $\tw3D_4$}   \label{tab:3D4}
$\begin{array}{c|cc}
 (d,e)& \text{intersections}\\
\hline
  (1,2)&  \{\ph{1,0},\ph{1,3}',\ph{1,3}'',\ph{1,6}\}\\
  (1,3)&  \{\ph{1,0},\ph{1,6},\ph{2,2}\},\{\ph{1,3}',\ph{1,3}'',\ph{2,1}\}\\
  (1,6)&  \{\ph{1,0},\ph{1,3}',\ph{1,3}'',\ph{1,6},\ph{2,2}\}\\
 (1,12)&  \{\ph{1,0},\ph{1,6},\ph{2,1}\}\\
  (3,6)&  \{\ph{1,0},\ph{1,3}',\ph{1,3}'',\ph{1,6},\ph{2,2},\tw3D_4[1]\}\\
 (3,12)& \{\ph{1,0},\ph{1,6},\ph{2,1}\} \\
\end{array}$
\smallskip

Here, again, we have omitted all singleton blocks.
\end{table}

\begin{proof}
The relative Weyl groups are described in \cite[Tab.~3]{BMM}, the parameters
for the corresponding cyclotomic Hecke algebras are predicted in
\cite[Folg.~5.6]{BM93} (see also the table compiled in \cite[8.8.1]{TX23}).
The blocks for non-cyclic $W$ are again to be found in \cite[Tab.~7.4]{GJ11}
for type $W(G_2)$, and have been worked out in \cite[3.3.1]{CM11} for the
primitive complex reflection group~$G_4$, respectively.
\end{proof}

\subsection{Type $F_4$}   \label{subsec:F4}
We now turn to type $F_4$. Again, the block sizes for the relevant algebras have
already been determined by Trinh and Xue \cite[8.7]{TX23}. Here, we also
compare the actual intersections of Harish-Chandra series. This group is
particularly interesting as there are ten pairs of unipotent characters of
$F_4(q)$ having equal degree, so for all of those are there potential choices
in the HC-parametrisation.

\begin{prop}   \label{prop:F4}
 Conjecture~\ref{conj:TX} holds for type $F_4$ with respect to some complete
 HC-parametrisation. The block intersections are displayed in Table~\ref{tab:F4}.
\end{prop}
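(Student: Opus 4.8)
The plan is to follow the template of the proofs of Propositions~\ref{prop:G2} and~\ref{prop:3D4}, the extra difficulty being the larger rank and the many coincidences among unipotent degrees. First I would enumerate the pairs $(d,e)$ with $d<e$ drawn from the list $\{1,2,3,4,6,8,12\}$ of integers $e$ for which $\Phi_e$ divides the order polynomial of $F_4$, discarding by Lemma~\ref{lem:def0} every case in which $\bL$ or $\bM$ equals $\bG$. Since $W(F_4)$ contains $-\Id$, Lemma~\ref{lem:Ennola}(b) identifies $(\bG,F)$ with its Ennola dual, so it suffices to treat one representative of each Ennola class $\{d,d'\}$: here $1\leftrightarrow 2$ and $3\leftrightarrow 6$, while $4$, $8$ and $12$ are self-dual, which removes a good part of the work. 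For each surviving pair I would read off the relative Weyl groups $W(\bL,\la)$ and $W(\bM,\mu)$ from \cite[Tab.~3]{BMM} and record the parameters of the associated cyclotomic Hecke algebras from the work of Lusztig, the predictions of \cite{BM93}, and the compilation in \cite[8.7]{TX23}.

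Next I would compute the blocks of each specialisation $\cH(W(\bL,\la),\ze_d)$ and $\cH(W(\bM,\mu),\ze_e)$. The relative Weyl groups arising for $F_4$ are cyclic groups (notably when $d$ or $e$ lies in $\{8,12\}$), the small Coxeter groups of types $A_1$, $A_1\times A_1$, $A_2$, $B_2$, $G_2$, the full group $F_4$, and a few primitive complex reflection groups of rank two (among $G_4$, $G_5$, $G_8$). For the cyclic cases the blocks follow immediately from comparing specialised parameters, as in Section~\ref{subsec:cyclic}; for the Coxeter cases I would take them from \cite[App.~F]{GP} and the decomposition matrices of \cite{GJ11}; and for the primitive groups from \cite{CM11} together with the algorithmic procedure of Section~\ref{subsec:algo}, all of which lie within the range that procedure settles completely. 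With every block decomposition in hand, I would form, for each $(d,e)$, the intersection of the two Harish-Chandra series inside $\Uch(F_4(q))$ and the candidate partition $\bigsqcup B_i$.

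The crux, and the step I expect to be hardest, is exhibiting a \emph{single} complete HC-parametrisation $\Psi$ that makes the block condition of Conjecture~\ref{conj:TX} hold simultaneously for all $(d,e)$. For $F_4$ there are ten pairs of unipotent characters sharing the same degree polynomial, so condition~$(*)$ of Theorem~\ref{thm:label} fails to fix the labelling within each such pair; worse, a choice that reconciles the two block structures at one value of $d$ need not do so at another, so the assignments cannot be made one pair at a time. I would therefore treat the ambiguous labellings as unknowns, impose as constraints the block-matching requirements coming from each relevant $(d,e)$, and verify that the resulting system admits a consistent global solution. The rigidity needed to pin it down comes from the Frobenius-eigenvalue data that fixes certain labellings outright (as Lusztig's \cite[(7.3)]{Lu76} does for the cuspidal pair $G_2[\tht]$), and from the compatibility with coarse blocks guaranteed at regular $d$ by Proposition~\ref{prop:reg}. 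The genuine danger is an incompatibility that would force two different assignments for one pair; ruling this out is precisely the heart of the verification.

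Finally, with $\Psi$ fixed, I would check in \Chevie{} \cite{Mi15} that for every $(d,e)$ the preimages $\Psi_{\bL,\la}^{-1}(B_i)$ and $\Psi_{\bM,\mu}^{-1}(B_i)$ are genuine blocks of $\cH(W(\bL,\la),\ze_d)$ and $\cH(W(\bM,\mu),\ze_e)$ respectively, which simultaneously establishes the asserted bijection of blocks and produces the entries of Table~\ref{tab:F4}. The Ennola-dual pairs then follow at once from Lemma~\ref{lem:Ennola}(a), completing the argument.
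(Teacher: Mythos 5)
Your proposal is correct and follows essentially the same route as the paper: it assembles the relative Weyl groups and cyclotomic Hecke algebra parameters from \cite{BMM} and \cite{BM93}, takes the block decompositions from \cite{GP}, \cite{GJ11}, \cite{CM11} (and, where needed, the algorithm of Section~\ref{subsec:algo}), and then matches the Harish-Chandra series intersections by direct \Chevie-based computation while fixing one global HC-parametrisation consistent across all pairs $(d,e)$. Your explicit Ennola reduction and the constraint-satisfaction handling of the ten degree-degenerate pairs are simply more detailed accounts of what the paper's computation does implicitly (its Table~\ref{tab:F4} likewise lists only one representative of each Ennola class, and the ambiguity is resolved by exhibiting a single compatible parametrisation).
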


\begin{table}[htb]
\caption{Block intersections for $F_4$}   \label{tab:F4}
$$\begin{array}{c|cc}
 (d,e)& \text{intersections}\\
\hline
 (1,2)&  \{\ph{1,0},\ph{1,12}^*,\ph{1,24},\ph{2,4}^*,\ph{2,16}^*,\ph{9,2},\ph{9,6}^*,\ph{9,10},\ph{6,6}^*,\ph{8,3}^*,\ph{8,9}^*\},\\
  & \{\ph{4,1},\ph{4,7}^*,\ph{4,13}\},\{B_2[.1^2],B_2[.2],B_2[1^2.],B_2[2.]\}\\
 (1,3)&  \{\ph{1,0},\ph{1,12}^*,\ph{1,24},\ph{2,4}^*,\ph{2,16}^*,\ph{4,8}\}, \{\ph{4,1},\ph{4,7}^*,\ph{4,13},\ph{8,3}^*,\ph{8,9}^*,\ph{16,5}\}\\
 (1,4)&  \{\ph{1,0},\ph{1,24},\ph{4,8},\ph{9,2},\ph{9,10},\ph{6,6}',\ph{12,4},\ph{4,1},\ph{4,13}\},\\
  & \{\ph{2,4}',\ph{2,16}',\ph{4,7}'\},\{\ph{2,4}'',\ph{2,16}'',\ph{4,7}''\},\{B_2[.1^2],B_2[1.1],B_2[2.]\}\\
 (1,6)& \{\ph{1,0},\ph{1,24},\ph{2,4}^*,\ph{2,16}^*,\ph{9,6}^*,\ph{12,4},\ph{8,3}^*,\ph{8,9}^*\}, \\
  & \{B_2[.1^2],B_2[.2],B_2[1^2.],B_2[2.]\}\\
 (1,12)& \{\ph{1,0},\ph{1,24},\ph{6,6}'',\ph{4,1},\ph{4,13}\},
  \{B_2[.1^2],B_2[1.1],B_2[2.]\} \\
 (3,4)& \{\ph{1,0},\ph{1,24},\ph{4,8},\ph{4,1},\ph{4,13},F_4^2[1]\},
  \{\ph{2,4}',\ph{2,16}',\ph{4,7}'\},\{\ph{2,4}'',\ph{2,16}''\ph{4,7}''\},
 \\
 (3,6)&  \{\ph{1,0},\ph{2,16}^*,\ph{8,3}^*,F_4[\tht^2]\},
\{\ph{1,24},\ph{2,4}^*,\ph{8,9}^*,F_4[\tht]\},
\\
 (3,12)& \{\ph{1,0},\ph{4,13},F_4[\tht]\},\{\ph{1,24},\ph{4,1},F_4[\tht^2]\}\\
 (4,8)&  \{\ph{1,0},\ph{9,10},F_4[i]\},\{\ph{1,24},\ph{9,2},F_4[-i]\}
\\
 (4,12)&  \{\ph{1,0},\ph{1,24},B_2[1.1]\},
\{\ph{4,1},B_2[.1^2],F_4[-i]\},
\{\ph{4,13},B_2[2.],F_4[i]\}\\
 (8,12)& \{\ph{1,0},F_4[-i]\},\{\ph{1,24},F_4[i]\}\\
\end{array}$$
Here, an entry $\ph{d,b}^*$ stands for the \emph{two} characters
$\ph{d,b}'$ and $\ph{d,b}''$. Again, we have omitted all singletons, as
well as the pairs $(1,8),(3,8)$ where there is just one non-singleton block.
\end{table}

\begin{proof}
The relative Weyl groups are described in \cite[Tab.~1 and~3]{BMM}, the
parameters for the corresponding Hecke algebras are predicted in
\cite[Tab.~8.1]{BM93}. The relevant block decompositions are taken from
\cite[Tab.~F.3]{GP} for $W(F_4)$ and from \cite[3.3.2, 3.3.3]{CM11} for the
primitive complex reflection groups $G_5$ and $G_8$ occurring as relative Weyl
groups for $d=3,6$ and $d=4$, respectively. 
\end{proof}

\subsection{Type $E_6$}   \label{subsec:E6}
For $G$ of type $E_6$, again most block sizes for the relevant algebras have
been determined in \cite[8.7]{TX23}. Here, we also compare the blocks to the
intersections of Harish-Chandra series. The only unipotent characters not
distinguished by their degree are the two cuspidal ones, so only for those is
there a choice in the HC-parametrisation.

\begin{prop}   \label{prop:E6}
 Conjecture~\ref{conj:TX} holds for type $E_6$ with respect to some complete
 HC-parametrisation.
\end{prop}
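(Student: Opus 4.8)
The plan is to verify Conjecture~\ref{conj:TX} for type $E_6$ following exactly the same scheme used for the smaller types $G_2$, $\tw3D_4$ and $F_4$. First I would enumerate all relevant pairs $(d,e)$ with $d<e$ chosen from the integers for which $\Phi_d,\Phi_e$ divide the order polynomial of $E_6$ (namely $d,e\in\{1,\ldots,6,8,9,12\}$), discarding the case $d=e$ (vacuous) and, by Lemma~\ref{lem:def0}, all pairs where one of $\bL,\bM$ equals $\bG$. For each surviving pair I would read off from \cite[Tab.~1 and~3]{BMM} the $e$-cuspidal and $d$-cuspidal pairs together with their relative Weyl groups $W(\bL,\la)$, $W(\bM,\mu)$, and from \cite{BM93} the parameters of the associated cyclotomic Hecke algebras.

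Next I would compute the block partitions of the two specialised algebras $\cH(W(\bL,\la),\ze_d)$ and $\cH(W(\bM,\mu),\ze_e)$. Here the relative Weyl groups arising in $E_6$ are either cyclic, Weyl groups (types $A_n$ and $G_2$), or the primitive group $G_{25}$ (occurring for $d=3,6$) and $G_{26}$; for the cyclic groups the blocks are immediate from the defining parameters as recalled in Section~\ref{subsec:cyclic}, and for the others I would use the algorithmic method of Section~\ref{subsec:algo}, drawing block data from \cite[App.~F]{GP}, \cite{CM11} and \cite{Pl14} where available. Then, using the Schur-element degree formula $(*)$ of Theorem~\ref{thm:label} and the functionality of \Chevie\ \cite{Mi15}, I would compute both Harish-Chandra series $\Psi_{\bL,\la}(\Irr(\cH(W(\bL,\la),x)))$ and $\Psi_{\bM,\mu}(\Irr(\cH(W(\bM,\mu),x)))$ inside $\Uch(E_6)$, intersect them, and check that the two block partitions restrict compatibly to a common refinement $\bigsqcup B_i$ as demanded by the conjecture.

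The genuine subtlety, and the only place a real choice enters, concerns the two cuspidal unipotent characters $E_6[\tht]$ and $E_6[\tht^2]$, which share the same degree polynomial and hence are not distinguished by $(*)$; these are the only characters for which the HC-parametrisation $\Psi$ is not forced. The main work is therefore to exhibit a single consistent assignment of $\Psi_{\bL,\la}$ on these two characters that simultaneously matches the block structure across \emph{all} relevant $(d,e)$ pairs at once, rather than pair by pair. This is where I expect the difficulty to lie: unlike the degree-constraint $(*)$, which is local to each cuspidal pair, the compatibility of block intersections couples the different series together, so one must confirm that no obstruction arises when the labelling chosen to fix one pair $(d,e)$ is propagated to the others.

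Finally, I would invoke Ennola duality via Lemma~\ref{lem:Ennola} to avoid redundant computation: since $E_6$ does not contain $-\Id$ in its Weyl group, part~(b) does not apply directly, but part~(a) still lets me transfer any verified instance at $(d,e)$ to the dual datum, and in particular it accounts for the twisted groups $\tw2E_6$ which therefore need not be treated separately. Assembling the case-by-case checks, all of which are finite and carried out by direct computation in \Chevie, then yields the proposition. No single step is deep; the effort is in organising the finitely many $(d,e)$ pairs and in pinning down the cuspidal labelling coherently.
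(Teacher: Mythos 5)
Your proposal follows essentially the same route as the paper's proof: a finite case-by-case verification taking relative Weyl groups from \cite{BMM}, cyclotomic Hecke parameters from \cite{BM93}, block/decomposition data from \cite{GP}, \cite{CM11} and the algorithm of Section~\ref{subsec:algo} for $G_{25}$ (with a couple of cases from \cite{Pl14}), comparing the intersected Harish-Chandra series in \Chevie, handling $\tw2E_6$ by Ennola duality, and noting that the only freedom in the HC-parametrisation lies with the two cuspidal characters $E_6[\tht]$, $E_6[\tht^2]$. The only (harmless) slip is in your list of relative Weyl groups: $G_{26}$ does not arise for $E_6$ (it occurs for $E_7$), and $G_{25}$ occurs only for $d=3$, the $d=6$ series having $2$-dimensional relative Weyl groups covered by \cite{CM11}.
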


In this case and from now on, we do not display the block intersections
for reasons of space, but they can all be found in the github repository \cite{git}.

\begin{proof}
The relative Weyl groups are described in \cite[Tab.~1 and~3]{BMM}, the
parameters for the corresponding Hecke algebras are predicted in
\cite[Tab.~8.1]{BM93}. The relevant decomposition matrices are taken from
\cite[Tab.~F.4]{GP} for $W(E_6)$ and \cite[\S3.3]{CM11} for the 2-dimensional
groups. The decomposition matrices for $G_{25}$ were computed using the methods
expounded in Section~\ref{subsec:algo} (with just a
couple of cases taken from \cite{Pl14}).
\end{proof}

\subsection{Type $E_7$}   \label{subsec:E7}
The groups of type $E_7$ possess four pairs of unipotent characters not
distinguished by their degrees, so only for those is there a choice in the
HC-parametrisation.

\begin{prop}   \label{prop:E7}
 Conjecture~\ref{conj:TX} holds for type $E_7$ with respect to a suitable
 complete HC-parametrisation.
\end{prop}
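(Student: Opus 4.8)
The plan is to follow the same computational scheme that worked for $G_2$, $F_4$ and $E_6$, now applied to $E_7$. The key ingredients are all available: the relative Weyl groups $W(\bL,\la)$ for the various $e$-cuspidal pairs are tabulated in \cite[Tab.~1 and~3]{BMM}, the cyclotomic Hecke algebra parameters are predicted in \cite[Tab.~8.1]{BM93}, and the degree polynomials of unipotent characters together with the Lusztig induction data are accessible through \Chevie\ \cite{Mi15}. For each relevant pair $(d,e)$ with $d<e$ and both $\bL,\bM$ proper Levi subgroups (by Lemma~\ref{lem:def0} we may discard the case $\bM=\bG$, and by Lemma~\ref{lem:Ennola} we may reduce the number of pairs $(d,e)$ using Ennola duality), I would compute the block partitions of $\cH(W(\bL,\la),\ze_d)$ and $\cH(W(\bM,\mu),\ze_e)$ and then verify that, under a suitable HC-parametrisation $\Psi$, the image partitions restrict compatibly to the intersection of the two Harish-Chandra series, as demanded by Conjecture~\ref{conj:TX}.

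First I would assemble the list of all $e$-cuspidal pairs of $E_7$ and, for each, record whether $W(\bL,\la)$ is cyclic, a Coxeter group appearing in \cite[App.~F]{GP}, or a primitive complex reflection group of rank $\ge2$. For the cyclic relative Weyl groups the blocks are immediate from the defining parameters $(\dagger)$ via the criterion of Section~\ref{subsec:cyclic}, and whenever both groups are cyclic with $d,e$ regular, Corollary~\ref{cor:cyc} already settles the conjecture (conditional on \cite[($d$-HV3)]{BM93}). For the Weyl-group cases the decomposition matrices and hence blocks are read off from \cite[App.~F]{GP}. The remaining relative Weyl groups for $E_7$ are primitive complex reflection groups; their blocks I would compute using the heuristic algorithm of Section~\ref{subsec:algo}, combining Schur-element vanishing $(1)$, small-dimension irreducibility arguments $(2)$, the central-character test $(4)$ via the braid-centre generator $\bbe$, and the decomposition-matrix relation $(5)$.

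Next I would compute, for each pair of series, the intersection $\Psi_{\bL,\la}(\Irr(\cH(W(\bL,\la),x)))\cap\Psi_{\bM,\mu}(\Irr(\cH(W(\bM,\mu),x)))$ inside $\Uch(E_7(q))$, using degree polynomials to match characters. The one genuine freedom is the choice of $\Psi$ on the four pairs of unipotent characters sharing a degree polynomial, where condition $(*)$ of Theorem~\ref{thm:label} does not pin down the labelling. For these I would test the finitely many admissible assignments against all relevant $(d,e)$ simultaneously and exhibit a single global choice making every intersection split into matched blocks. Where available, rigidity inputs such as Frobenius-eigenvalue constraints (in the spirit of Proposition~\ref{prop:reg}, via \cite[Cor.~9.5]{DM14}) help eliminate spurious assignments and confirm coarse-block compatibility.

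The main obstacle is the same as in type $F_4$ and $E_6$: producing one complete HC-parametrisation $\Psi$ that works \emph{uniformly} across all pairs $(d,e)$, rather than merely matching block sizes pair by pair. Because the ambiguous character pairs recur across different $e$-series, an assignment resolving one intersection may conflict with another, and the search must be carried out coherently. A secondary difficulty is that for some primitive relative Weyl groups occurring in $E_7$ the algorithm of Section~\ref{subsec:algo} determines only when characters do \emph{not} share a block; I would need to confirm, using the degree-vanishing sum $(5)$ and explicit decomposition matrices, that the resulting ``approximate blocks'' are genuine blocks and not proper unions, so that the partition $\bigsqcup B_i$ of the conjecture is exact. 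Once these are verified, the proposition follows by direct comparison.
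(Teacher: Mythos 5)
Your proposal matches the paper's proof in all essentials: the paper likewise takes the relative Weyl groups from \cite[Tab.~1 and~3]{BMM}, the Hecke algebra parameters from \cite[Tab.~8.1]{BM93}, the blocks from \cite[Tab.~F.5]{GP} for $W(E_7)$ and \cite[\S3.3]{CM11} for the $2$-dimensional groups, computes the remaining decomposition matrices (for $G_{26}$) by the algorithm of Section~\ref{subsec:algo}, and then verifies the conjecture by direct comparison under a suitable choice of HC-parametrisation for the four ambiguous character pairs. Your worry about ``approximate blocks'' is a reasonable precaution but does not materialise in type $E_7$ --- that issue only affects $G_{31}$, $G_{32}$ (in $E_8$) and $G_{33}$.
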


\begin{proof}
The relative Weyl groups are described in \cite[Tab.~1 and~3]{BMM}, the
parameters for the corresponding Hecke algebras are predicted in
\cite[Tab.~8.1]{BM93}. The relevant blocks are taken from \cite[Tab.~F.5]{GP}
for $W(E_7)$ and \cite[\S3.3]{CM11} for 2-dimensional groups.
The decomposition matrices for $G_{26}$ were computed using the methods
explained in Section~\ref{subsec:algo}.
\end{proof}

Again, we do not print the actual intersections.
In Table~\ref{tab:bl E7} we collect the number of non-singleton blocks appearing
in the intersection of Harish-Chandra series for the relevant pairs $(d,e)$.

\begin{table}[htb]
\caption{Numbers of non-trivial block intersections for $E_7$}   \label{tab:bl E7}$\begin{array}{c|ccccccccccccc}
 e& d= 2& 3& 4& 5& 6& 7& 8& 9& 10& 12& 14& 18\\
\hline
 1& 4& 8& 4& 6& 6& 2& 8& 2& 5& 4& 2& 3\\
 3& & & 12& 6& 13& 4& 10& 4& 9& 6& 4& 4\\
 4& & & & 8& 12& 4& 16& 4& 8& 6& 4& 4\\
 5& & & & & 9& 4& 8& 4& 8& 4& 3& 3\\
 7& & & & & & & 4& 2& 3& 2& 2& 1\\
 8& & & & & & & & 4& 8& 6& 4& 4\\
 9& & & & & & & & & 3& 4& 1& 4\\
 12& & & & & & & & & & & 2& 4\\
\end{array}$
\end{table}

\subsection{Type $E_8$}   \label{subsec:E8}
For type $E_8$ we have not been able to determine the block decompositions
completely for all relevant cyclotomic Hecke algebras. More precisely, the
dimensions of the irreducible representations of the 4-dimensional reflection
group $G_{32}$ (and thus of its Hecke algebra), occurring as relative Weyl
group for the principal series when $d=3$ or $d=6$, are too large. The
situation for the 4-dimensional group $G_{31}$, occurring when $d=4$, is even
worse, as the data available in \Chevie\ are rather incomplete. In these two
situations we only obtained approximations to the actual block decompositions,
see the remarks at the end of Section~\ref{subsec:algo}. These approximations
turned out to be in accordance with the Trinh--Xue conjecture as well.

\begin{prop}   \label{prop:E8}
 Conjecture~\ref{conj:TX} holds for type $E_8$, except possibly when one of
 the two Harish-Chandra series is a principal $d$-series with $d\in\{3,4,6\}$.
\end{prop}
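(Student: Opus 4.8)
The plan is to treat Proposition~\ref{prop:E8} exactly as the preceding type-by-type propositions, following the general procedure laid out in Sections~\ref{subsec:veri}--\ref{subsec:algo}, but with the reduction steps doing as much work as possible so that the genuinely hard computations are isolated. First I would record, from \cite[Tab.~1 and~3]{BMM}, all the $e$-cuspidal pairs $(\bL,\la)$ of $E_8$ together with their relative Weyl groups, and from \cite[Tab.~8.1]{BM93} the parameters of the associated cyclotomic Hecke algebras. By the standing reductions (Lemma~\ref{lem:def0} lets us discard the pairs with $\bM=\bG$, and we may assume $d<e$ with both $\bL,\bM$ proper), and by Ennola duality (Lemma~\ref{lem:Ennola}), the list of pairs $(d,e)$ to be treated shrinks considerably; in particular every pair is reduced to one with $d\le e$ and duplicate cases related by $F\mapsto F'$ are handled simultaneously. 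Whenever \emph{both} $W(\bL,\la)$ and $W(\bM,\mu)$ are cyclic and the relevant numbers are regular, Corollary~\ref{cor:cyc} disposes of the case immediately; more generally Proposition~\ref{prop:reg} reduces matters to the coarse blocks, which I would check are fine enough here.

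The core of the proof is then the explicit determination of blocks of the specialised algebras $\cH(W(\bL,\la),\ze_d)$ for the remaining, non-cyclic relative Weyl groups. Next I would assemble the needed block (or decomposition-matrix) data: for the Weyl-group cases $W(E_8)$, $W(E_7)$, $W(E_6)$, $W(F_4)$, $W(D_4)$ and their parabolics these are available from \cite[App.~F]{GP}; for the rank-$2$ primitive groups $G_4,\dots,G_8$ occurring as relative Weyl groups they come from \cite{CM11}; and for the higher-rank primitive groups $G_{25},G_{26},G_{29},G_{32}$ (and $G_{31}$) I would compute the blocks directly by the heuristic algorithm of Section~\ref{subsec:algo}, combining the Schur-element defect test~(1), the central-character criterion~(4) via the braid generator $\bbe$, and the summation relation~(5). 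With the blocks in hand, I would fix, for each pair of Harish-Chandra series, a complete HC-parametrisation $\Psi$ (exploiting the freedom allowed by the fact that $(*)$ does not pin down $\Psi$ uniquely on the pairs of equal-degree characters) and verify by direct computation in \Chevie\ \cite{Mi15} that the two block partitions, transported into $\Uch(E_8)$ via $\Psi_{\bL,\la}$ and $\Psi_{\bM,\mu}$, admit a common refinement of the required form $\bigsqcup B_i$.

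The main obstacle, and the reason for the stated exclusion, is precisely the $4$-dimensional primitive groups $G_{32}$ (for the principal series at $d\in\{3,6\}$) and $G_{31}$ (at $d=4$). Here the irreducible representations of the generic Hecke algebra have very large dimension, and for $G_{31}$ the representation data in \Chevie\ are incomplete, so the full decomposition matrices are out of reach; the algorithm of Section~\ref{subsec:algo} only certifies when two characters do \emph{not} lie in the same block, and hence yields merely the ``approximate blocks'' mentioned in the text, which could still be unions of genuine blocks. The honest thing to do, and what I would do, is to establish the conjecture completely for all pairs $(d,e)$ in which neither series is a principal $d$-series with $d\in\{3,4,6\}$, where the blocks \emph{can} be pinned down exactly, and to note separately that the approximate decompositions obtained in the excluded cases are at least consistent with Conjecture~\ref{conj:TX}. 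This matches the statement of the proposition precisely, and the proof would simply cite the sources for the block data and refer to the repository \cite{git} for the explicit intersection tables rather than reproducing the voluminous computations.
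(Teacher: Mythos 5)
Your proposal follows essentially the same route as the paper: cite the relative Weyl groups and Hecke-algebra parameters from \cite[Tab.~1 and~3]{BMM} and \cite[Tab.~8.1]{BM93}, take the block decompositions from \cite[Tab.~F.6]{GP} and \cite[\S3.3]{CM11}, verify the matching of block partitions by direct computation in \Chevie\ under a suitably chosen HC-parametrisation, and exclude the principal $d$-series for $d\in\{3,4,6\}$ precisely because only approximate blocks are obtainable for $G_{32}$ and $G_{31}$. The only slip is your list of higher-rank primitive groups to be handled by the algorithm of Section~\ref{subsec:algo} ($G_{29}$ does not occur as a relative Weyl group here, and $G_{25},G_{26}$ arise in $E_6,E_7$ rather than in the $E_8$ cases covered by the proposition), but this does not affect the argument.
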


\begin{proof}
The relative Weyl groups are described in \cite[Tab.~1 and~3]{BMM}, the
parameters for the corresponding Hecke algebras are predicted in
\cite[Tab.~8.1]{BM93}. The relevant blocks are taken from \cite[Tab.~F.6]{GP}
for $W(E_8)$ and \cite[\S3.3]{CM11} for 2-dimensional groups.
\end{proof}

\subsection{Suzuki and Ree groups}

We now consider a closely related setting. In this section let $\bG$ be simple
of type $B_2,G_2$ or $F_4$ in characteristic~$p=2,3,2$ respectively, and assume
$F:\bG\to\bG$ is a Steinberg endomorphism which is not a Frobenius, so
$G:=\bG^F$ is a Suzuki group $\tw2B_2(q^2)$ or a Ree group $^2G_2(q^2)$ or
$\tw2F_4(q^2)$, for $q^2$ an odd power of $p$. Here, one can still define
$\Phi$-tori, which satisfy a Sylow theory, where now $\Phi$ is a cyclotomic
polynomial over $\QQ(\sqrt{p})$, see e.g.\ \cite[3.5.3]{GM20}. Furthermore, the
relative Weyl groups are again complex reflection groups, and there is a
$\Phi$-Harish-Chandra theory for unipotent characters as in the case of
Frobenius maps \cite[\S3A]{BMM}. It is therefore not surprising that the
following extension of Conjecture~\ref{conj:TX} holds:

\begin{prop}   \label{prop:SuzRee}
 The Suzuki and Ree groups satisfy the analogue of Conjecture~\ref{conj:TX} for
 all pairs of cyclotomic polynomials over $\QQ(\sqrt{p})$.
\end{prop}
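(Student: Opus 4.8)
The plan is to proceed exactly as in the Frobenius case, verifying the conjecture group-by-group (here only three groups $\tw2B_2$, $\tw2G_2$, $\tw2F_4$) by explicit computation, using the fact that all the structural ingredients of the conjecture are available in this Steinberg-map setting. First I would recall that, as noted in the paragraph preceding the statement, over $\QQ(\sqrt p)$ one has a full Sylow $\Phi$-theory, $\Phi$-split Levi subgroups, $\Phi$-cuspidal pairs, relative Weyl groups that are genuine complex reflection groups, and associated cyclotomic Hecke algebras together with the degree formula $(*)$ of Theorem~\ref{thm:label} and its analogue of \cite[Thm~3.2]{BMM}. With these in hand the conjecture is literally the same statement, so the proof strategy of Section~\ref{sec:Lie} applies verbatim.

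Next I would observe that the general reduction tools carry over. In particular Lemma~\ref{lem:def0} holds for any $\bG$ (its proof uses only the degree formula and the divisibility result \cite[Prop.~2.4]{BMM}), so one may assume both $\bM$ and $\bL$ are proper $\Phi$-split Levi subgroups; and the conjecture is vacuous when the two series coincide. For each of the three groups the relevant cyclotomic polynomials $\Phi$ over $\QQ(\sqrt p)$, the $\Phi$-cuspidal pairs, their relative Weyl groups, and the parameters of the attached cyclotomic Hecke algebras are all recorded in \cite[\S3A]{BMM} and the references therein (and are implemented in \Chevie). The relative Weyl groups occurring are small---cyclic groups and a few rank-one or rank-two reflection groups---so their specialised block decompositions are either immediate from the cyclic-case recipe of Section~\ref{subsec:cyclic} or already tabulated in \cite{CM11}.

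Then I would feed these data into the same verification procedure: compute the Harish-Chandra series and the block partitions of the two specialised algebras $\cH(W(\bL,\la),\ze_d)$ and $\cH(W(\bM,\mu),\ze_e)$, fix a complete HC-parametrisation using the degree constraint $(*)$ (supplemented, where degrees do not separate characters, by Frobenius-eigenvalue data as in the Frobenius case), and check that the images of the blocks in the intersection of series agree. Because the groups are so small, the number of pairs $(\Phi_d,\Phi_e)$ to consider is very limited and every relative Weyl group is cyclic or of rank~$\le2$, so no computationally intractable case (of the kind that obstructs $E_8$) arises.

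The only genuine subtlety---and hence the main point to be careful about---is the HC-parametrisation for characters not separated by their degree, most notably the cuspidal unipotent characters of $\tw2F_4(q^2)$ carrying non-real Frobenius eigenvalues. As in type $G_2$ (Proposition~\ref{prop:G2}) and $F_4$ (Proposition~\ref{prop:F4}), one must choose the bijection $\Psi_{\bL,\la}$ consistently so that the block images match simultaneously across all relevant pairs $(d,e)$; this is pinned down by the degree formula together with the Frobenius-eigenvalue/central-character information of \cite[(7.3)]{Lu76} and Section~\ref{subsec:algo}(4). Once a compatible choice is made, the claim follows by direct computation, exactly as in the propositions for the exceptional Frobenius groups.
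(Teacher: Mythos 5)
Your proposal is correct and follows essentially the same route as the paper: a case-by-case computational verification in which the relative Weyl groups are cyclic (handled by the explicit recipe of Section~\ref{subsec:cyclic}) except for $G_8$ and $G_{12}$ occurring in $\tw2F_4(q^2)$, whose specialised block decompositions are taken from \cite{CM11}, with Hecke algebra parameters from \cite{Lu76} and \cite[Tab.~8.1]{BM93}. Your additional remarks on reducing via Lemma~\ref{lem:def0} and on fixing the HC-parametrisation for degree-ambiguous characters are sound and consistent with the paper's general framework, just made more explicit than in its brief proof.
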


\begin{proof}
The relative Weyl groups $W_\Phi$ of Sylow $\Phi$-tori in this case are cyclic
unless $G=\tw2F_4(q^2)$ and the roots of $\Phi$ have order dividing~8, see
e.g.\ \cite[Tab.~3.2]{GM20}. The parameters of the corresponding cyclotomic
Hecke algebras have been given in \cite{Lu76} for the Coxeter case and in
\cite[Tab.~8.1]{BM93} otherwise. The decomposition numbers for the non-cyclic
relative Weyl groups $G_8$ and $G_{12}$ are taken from \cite{CM11}.
\end{proof}

\section{Extension to the spetsial case}   \label{sec:spets}
We consider an extension of Conjecture~\ref{conj:TX} to spetsial complex
reflection groups. These include, in particular, all finite Coxeter groups.
We refer to \cite{BMM14} for notions and general facts about spetses.

\subsection{A generalised conjecture}
Let $W$ be a \emph{spetsial} (finite) complex reflection group on $V=\CC^n$
(see \cite[\S3]{MaICM}). Let moreover $\vhi\in\GL(V)$ be normalising $W$. Then
attached to the \emph{spets} $\GG:=(V,W\vhi)$ there is a set of \emph{unipotent
characters $\Uch(\GG)$} together with a \emph{degree} function
$$\Uch(\GG)\longrightarrow\CC[x],\quad \rho\mapsto \rho(1),$$
and an action of $N_{\GL(V)}(W\vhi)$ on $\Uch(\GG)$ (see \cite{Lu93} for the
non-rational Coxeter groups, \cite{Ma95} and \cite[Thm~8.4]{Ma00} for the
non-real cases, and also \cite{BMM14} for explicit tables for
primitive~$W$). If $W$ is a Weyl group, $\Uch(\GG)$ are just the (labels of the)
unipotent characters of an associated finite reductive group of rational
type $W\vhi$ together with their degree polynomials. For any root of unity
$\ze$ and $\Phi:=x-\ze$, there is defined a subset of
\emph{$\Phi$-cuspidal} elements of $\Uch(\GG)$. A (unipotent)
\emph{$\Phi$-cuspidal pair} of $\GG$ is a pair $(\LL,\la)$ where $\LL$ is a
$\Phi$-split Levi of $\GG$ and $\la\in\Uch(\LL)$ is $\Phi$-cuspidal.
For $(\LL,\la)$ a $\Phi$-cuspidal pair let $W_\GG(\LL,\la)$ be the stabiliser
of $\la$ in $W_\GG(\LL)$ (see \cite[Rem.~4.4]{BMM14}). It turns out by
inspection \cite[Rem.~4.15]{BMM14} that this is again a complex reflection
group. Attached to $(\LL,\la)$ is
also a Hecke algebra $\cH(W_\GG(\LL,\la),x)$ in one parameter $x$
(a specialisation of the generic Hecke algebra of $W_\GG(\LL,\la)$ for
parameters specified in \cite{Ma95,Ma97,BMM14}) naturally isomorphic (via the
specialisation $x\mapsto\ze$ and Tits' deformation theorem) to the group
algebra of $W_\GG(\LL,\la)$ over a splitting field. This Hecke algebra
should carry a canonical symmetrising form with associated Schur elements
denoted $S_\chi$, see Section~\ref{subsec:cyc}. According to
\cite[Ax.~4.31]{BMM14}, there is a partition
$$\Uch(\GG)=\displaystyle\coprod \Uch(\GG,(\LL,\la))$$
into \emph{$\Phi$-Harish-Chandra series}, where the union runs through
representatives of $W$-orbits of $\Phi$-cuspidal pairs $(\LL,\la)$ for $\GG$,
and as in the case of finite reductive groups (Theorem~\ref{thm:label}), for
any such $(\LL,\la)$ there is a bijection
$$\Psi_{\LL,\la}:\Irr(\cH(W_\GG(\LL,\la),x)) \rightarrow \Uch(\GG,(\LL,\la)),\qquad
   \vhi \mapsto \rho_\vhi,$$
such that the exact analogue of~$(*)$ holds.
It thus makes sense to propose the following, where $(\MM,\mu)$ is a
$\Phi'=x-\ze'$-cuspidal pair of $\GG$:

\begin{conj}   \label{conj:spets}
 In the above setting, the intersection of $\Phi$- resp.\ $\Phi'$-Hardish-Chandra
 series
 $$\Psi_{\LL,\la}(\Irr(\cH(W(\LL,\la),x)))
     \ \cap\ \Psi_{\MM,\mu}(\Irr(\cH(W(\MM,\mu),x)))$$
 admits a partition $\bigsqcup B_i$
 such that $\Psi_{\LL,\la}^{-1}(B_i)$ is a block of $\cH(W(\LL,\la),\ze')$
 and $\Psi_{\MM,\mu}^{-1}(B_i)$ is a block of $\cH(W(\MM,\mu),\ze)$ for
 all $i$, thus defining a bijection between those blocks of the
 two algebras appearing in this intersection.
\end{conj}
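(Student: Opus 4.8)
The plan is to treat Conjecture~\ref{conj:spets} as a purely combinatorial statement about Schur elements of two generic Hecke algebras attached to the same spets $\GG$, and to verify it by explicit computation for the groups listed in Theorem~\ref{thm:spets}, exactly mirroring the strategy employed for the exceptional Lie type groups in Section~\ref{sec:Lie}. The three ingredients needed in each case are: (i) the list of $\Phi$-cuspidal pairs $(\LL,\la)$ and their relative Weyl groups $W_\GG(\LL,\la)$, available from \cite[Tab.~1 and~3]{BMM} and \cite{BMM14}; (ii) the cyclotomic parameters specialising the generic Hecke algebra, predicted in \cite[Tab.~8.1]{BM93} and \cite{Ma95,Ma97,BMM14}; and (iii) the block decompositions of the root-of-unity specialisations $\cH(W_\GG(\LL,\la),\ze')$, obtained either from the literature or via the algorithm of Section~\ref{subsec:algo}. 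Since the degree function and the bijections $\Psi_{\LL,\la}$ satisfying~$(*)$ are available in all these cases, the intersection of Harish--Chandra series on the left-hand side of the conjecture is computable directly from the unipotent degrees.

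First I would fix a spets $\GG$ from the list (the non-crystallographic Coxeter groups $H_3,H_4,I_2(m)$ and the primitive groups $G_4,G_6,G_8,G_{24}$) and, for each pair $(\Phi,\Phi')$ of linear cyclotomic factors of the order polynomial, enumerate the two Harish--Chandra series and identify their intersection as a set of unipotent characters via the degree polynomials, using the functionality of \Chevie\ \cite{Mi15}. Whenever all characters in the intersection are distinguished by their degrees, the bijections $\Psi_{\LL,\la}$ and $\Psi_{\MM,\mu}$ are forced on this intersection by~$(*)$, and no choice of HC-parametrisation is needed; for the remaining cases (characters of equal degree, e.g.\ cuspidal characters coming in Galois-conjugate families) I would exhibit a compatible parametrisation explicitly, as was done for $G_2,F_4$ and $E_6,E_7$. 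Next, for each of the two relative Weyl groups I would compute the blocks of the corresponding specialised cyclotomic Hecke algebra: for cyclic $W_\GG(\LL,\la)$ this is immediate from the linear characters~$(\dagger)$ and the criterion $\ga_i\ze^{a_i}=\ga_j\ze^{a_j}$ described in Section~\ref{subsec:cyclic}, while for the rank-$2$ groups $G_4,G_6,G_8$ the blocks are available from \cite{CM11} and for $G_{24}$ from the methods of Section~\ref{subsec:algo}. Pulling these blocks back through $\Psi_{\LL,\la}^{-1}$ and $\Psi_{\MM,\mu}^{-1}$ and restricting to the intersection then yields the desired common partition $\bigsqcup B_i$, which I would verify agrees on both sides.

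Two structural simplifications reduce the workload substantially. Ennola duality (Lemma~\ref{lem:Ennola}) lets me dispose of all pairs $(\ze,\ze')$ obtained from a previously checked pair by the substitution $x\mapsto -x$, and in particular, since each of the listed groups contains $-\Id$ (or, for $I_2(m)$, the relevant reflection cosets are self-Ennola-dual), part~(b) of that lemma handles the pairs $(e,e')$ automatically. Likewise, Lemma~\ref{lem:def0} removes all pairs where one of the cuspidal pairs is $(\GG,\mu)$ with $W_\GG(\GG,\mu)=1$, and Proposition~\ref{prop:reg} together with Corollary~\ref{cor:cyc} settles at once all pairs in which both relative Weyl groups are cyclic and both $\ze,\ze'$ are regular --- which covers the bulk of the cases for the dihedral and rank-$2$ groups. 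The remaining genuinely non-cyclic cases are few, and for each I would simply tabulate the block intersections.

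The main obstacle I anticipate is not conceptual but the reliability and completeness of the input data in the non-crystallographic and non-real settings: unlike for Weyl groups, the canonical symmetrising form and hence the Schur elements $S_\chi$ are only known to exist for $G_4,G_6,G_8,G_{24}$ among the primitive groups appearing here (see the list in Section~\ref{subsec:cyc}), and the $\Phi$-Harish-Chandra theory for the non-rational spetses rests on the axiomatic framework of \cite[Ax.~4.3, 4.31]{BMM14} rather than on an underlying algebraic group. Consequently the principal difficulty will be verifying that the algorithm of Section~\ref{subsec:algo} genuinely determines the blocks (and does not merely separate characters that are provably not in the same block), especially for $G_{24}$ where, as in the $G_{31},G_{32}$ cases for $E_8$, one risks obtaining only approximate block decompositions. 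This is precisely why $G_{33}$ is relegated to partial results: for that $5$-dimensional group the representation-theoretic data are insufficient to pin down the blocks completely, and I would expect to obtain there only the approximate-block statement, checked to be consistent with Conjecture~\ref{conj:spets}.
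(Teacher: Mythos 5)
Your proposal matches the paper's own treatment: since the statement is a conjecture, the paper likewise only verifies it case by case (Propositions~\ref{prop:I2(m)}, \ref{prop:H3}, \ref{prop:H4}, \ref{prop:G4} and the partial Proposition~\ref{prop:G33}), using exactly the reductions you describe (Ennola duality via Remark~\ref{rem:Enn}, the cyclic case via the analogue of Corollary~\ref{cor:cyc}) and the same data sources (parameters from \cite{BM93,Ma95,BMM14}, block/decomposition data from \cite{GJ11,Mu97,CM11} and the algorithm of Section~\ref{subsec:algo}). Two small points of fact: the relevant parameter table for $H_3,H_4$ is \cite[Tab.~8.3]{BM93} rather than Tab.~8.1 (and \cite[Tab.~1 and~3]{BMM} does not cover the non-Weyl spetses), and the group for which only approximate blocks are obtained is $G_{33}$ (principal $d$-series, $d\in\{1,2\}$), not $G_{24}$, which the paper settles completely.
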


Note that for $W$ a Weyl group this specialises to Conjecture~\ref{conj:TX}.

It seems reasonable to expect that arguments similar to those used in
\cite[\S6]{TX23} might be applicable to show this for the infinite series of
imprimitive spetsial groups.

\begin{rem}   \label{rem:Enn}
\vbox{\ }
\begin{enumerate}[(a)]
\item Observe that the case of Conjecture~\ref{conj:spets} where both relative
 Weyl groups are cyclic again holds by the analogue of Corollary~\ref{cor:cyc}.
\item There is again an Ennola phenomenon: If the centre of $W$ (assumed to be
 irreducible) is cyclic of order $z$, then replacing $x$ by $\exp(2\pi i/z)x$
 induces a permutation on the set of unipotent characters as well as on the set
 of associated cyclotomic Hecke algebras (see \cite[4.2.6]{BMM14}) and hence of
 their block distributions, meaning that it suffices to check
 Conjecture~\ref{conj:spets} for just one representative in each orbit under
 this Ennola permutation (generalising Lemma~\ref{lem:Ennola}(a)).
\end{enumerate}
\end{rem}

In the sequel we verify Conjecture~\ref{conj:spets} for some exceptional cases.

\subsection{The non-crystallographic Coxeter groups}
The first interesting case for Conjecture~\ref{conj:spets} is of course that of
non-crystallographic Coxeter groups, that is, of types $I_2(m)$ with $m\ge3$,
$H_3$ and~$H_4$.

\begin{prop}   \label{prop:I2(m)}
 Conjecture~\ref{conj:spets} holds for type $I_2(m)$, $m\ge3$.
\end{prop}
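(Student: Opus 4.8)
The plan is to treat $I_2(m)$ by reducing almost everything to the cyclic case already handled in Remark~\ref{rem:Enn}(a), and to deal head-on only with the finitely many $\Phi$ for which the relative Weyl group is the full dihedral group $W=I_2(m)$ itself. Concretely, $I_2(m)$ is a rank-2 Coxeter group, so a $\Phi$-cuspidal pair $(\LL,\la)$ falls into one of three shapes: $\LL=\GG$ (whence $W_\GG(\LL,\la)=1$, and the conjecture holds trivially by the analogue of Lemma~\ref{lem:def0}); $\LL$ a proper $\Phi$-split Levi of rank $1$, in which case $W_\GG(\LL,\la)$ is cyclic; or $\LL$ a torus with $W_\GG(\LL,\la)$ the whole of $I_2(m)$. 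First I would enumerate, as a function of $m$, exactly which $\Phi=x-\ze$ produce each shape: a $\Phi$-split Levi of full rank arises precisely when $\ze$ is a \emph{regular} eigenvalue, i.e. when the order $d$ of $\ze$ is a regular number for $I_2(m)$. The regular numbers for $I_2(m)$ are the divisors of $m$ together with $2$ (the degrees being $2$ and $m$, the Coxeter number), so the $\Phi$ giving $W_\GG(\LL,\la)=I_2(m)$ are exactly those $\ze$ whose order divides $m$ or equals $2$.

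Once the combinatorics of cuspidal pairs is fixed, the core computation is to determine the blocks of the one-parameter spetsial Hecke algebra $\cH(I_2(m),x)$ under the various specialisations $x\mapsto\ze'$. The generic Hecke algebra of $I_2(m)$ has two generators $s,t$ with the standard braid relation of length $m$ and quadratic relations $(s-u_0)(s-u_1)=0$, $(t-v_0)(t-v_1)=0$; in the spetsial one-parameter specialisation the parameters $u_i,v_j$ are monomials $\ze_k x^{a_k}$ prescribed in \cite{Ma95,BMM14}. Its irreducible characters are the two (or four, when $m$ is even) linear characters together with the $\lfloor(m-1)/2\rfloor$ two-dimensional characters $\chi_j$, for which both Schur elements $S_{\chi_j}$ and the central-character values $\omega_{\chi_j}(\bbe)$ on the generator $\bbe$ of the centre of the braid group are explicitly known (e.g.\ via \cite[Prop.~6.15]{BMM99} and the rank-2 computations of \cite{CM11}). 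I would therefore compute, for each relevant root of unity $\ze'$, the defect of each character using criterion~(1) of Section~\ref{subsec:algo} (a character is alone in its block iff $\tht(S_\chi)\neq0$) and separate the remaining characters using the central-character criterion~(4); since $I_2(m)$ has rank $2$, Remark~(2) after the algorithm guarantees the representations are liftable, so these criteria in fact determine the blocks exactly.

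It remains to verify the intersection statement itself: for a pair $(\Phi,\Phi')$ one must exhibit the partition $\bigsqcup B_i$ of $\Psi_{\LL,\la}(\Irr\cH(W_\GG(\LL,\la),x))\cap\Psi_{\MM,\mu}(\Irr\cH(W_\GG(\MM,\mu),x))$ for which $\Psi_{\LL,\la}^{-1}(B_i)$ is a $\ze'$-block on one side and $\Psi_{\MM,\mu}^{-1}(B_i)$ is a $\ze$-block on the other. Whenever at least one of the two relative Weyl groups is cyclic this is immediate from the cyclic case in Remark~\ref{rem:Enn}(a) (and, for the trivial-Levi-on-one-side case, from the analogue of Lemma~\ref{lem:def0}), so the only genuinely two-dimensional interactions are the finitely many pairs $(\Phi,\Phi')$ both of whose orders are regular for $I_2(m)$ --- for $m$ odd these are just orders in $\{2,m\}$, and for $m$ even the divisors of $m$ together with $2$. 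For these I would match the degree data: using the analogue of $(*)$, each unipotent character's degree polynomial determines its image under $\Psi$, so the Harish-Chandra series and their intersections are read off from the known degrees of $\Uch(I_2(m))$ (tabulated via \cite{Lu93,Ma95,BMM14}), and one checks directly that the block partitions computed in the previous paragraph are compatible across the intersection.

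The main obstacle is that $m$ is a parameter rather than a fixed number, so the verification cannot be a finite table lookup: I expect the hard part to be organising the block computation for $\cH(I_2(m),x)$ \emph{uniformly in} $m$. This requires expressing the Schur elements $S_{\chi_j}$ and the central values $\omega_{\chi_j}(\bbe)$ as explicit closed forms in $m,j$ and the specialisation parameters, then deciding, for each divisor-type root of unity $\ze'$, which indices $j$ collide --- a problem that reduces to elementary but $m$-dependent congruence conditions on the exponents $a_k$ and the roots of unity $\ze_k$. I would separate the analysis into residues of $m\bmod 4$ (controlling the number of linear characters and the Ennola behaviour of Remark~\ref{rem:Enn}(b)), and handle the small sporadic $m$ (where extra coincidences among degrees can occur) by direct computation, reducing the count of genuinely distinct cases to a manageable finite list via the Ennola permutation.
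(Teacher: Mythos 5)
There is a genuine gap here --- in fact two related ones, both concerning which relative Weyl groups actually occur for $I_2(m)$.

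First, your identification of the non-cyclic cases is wrong. You claim that $W_\GG(\LL,\la)=I_2(m)$ precisely when the order $d$ of $\ze$ is a regular number (a divisor of $m$, or $2$). But regularity of $d$ only means that the Sylow $\Phi$-split Levi is a torus $\TT$; the relative Weyl group of the pair $(\TT,1)$ is then the centraliser $C_W(w)$ of a $\ze$-regular element $w$, acting on the $\ze$-eigenspace (Springer's theory, cf.\ \cite{BM97}). For $2<d\mid m$ the regular element is a rotation of order $d$, whose centraliser in the dihedral group is the full rotation subgroup, so $W_\GG(\TT,1)$ is \emph{cyclic} of order $m$, not $I_2(m)$; likewise for $m$ odd and $d=2$ it is cyclic of order $2$. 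The relative Weyl group is the whole dihedral group only for $d=1$, and for $d=2$ when $m$ is even. Consequently, for every series with $2<e\mid m$ your ``core computation'' would be carried out on the wrong algebra: the correct one is the cyclic algebra $\cH(C_m,\bx)$ with the $m$ parameters of \cite[Satz~6.10]{Ma95}, whose $m$ linear characters parametrise that $\Phi$-series (which contains cuspidal unipotent characters, not only principal series ones); a specialisation of $\cH(I_2(m),x)$ does not even have the right number of irreducible characters to serve as $\Psi_{\LL,\la}$ for these series.

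Second, your reduction ``whenever at least one of the two relative Weyl groups is cyclic this is immediate from Remark~\ref{rem:Enn}(a)'' is false: that remark, like Corollary~\ref{cor:cyc}, requires \emph{both} relative Weyl groups to be cyclic, because the argument shows that coarse blocks (cut out by central character values) coincide with blocks on each side; if one side is dihedral, its coarse blocks may be strictly coarser than its blocks and no conclusion follows. These two errors compound: the genuinely hard cases for $I_2(m)$ are exactly the mixed ones, $d=1$ (relative Weyl group $I_2(m)$) against $2<e\mid m$ (relative Weyl group cyclic of order $m$), and your plan never treats them correctly --- they are either dismissed by the false ``one side cyclic'' principle or computed with the wrong algebra on the $e$-side. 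The paper's proof consists essentially of these cases: after reducing to $d=1$ (Corollary~\ref{cor:cyc} when both sides are cyclic, Ennola duality for $d=2$ with $m$ even), it matches the unique non-singleton block $\{1_W,\sgn,\boldsymbol\si_e\}$ of the Iwahori--Hecke algebra $\cH(I_2(m),(x,-1))$ specialised at $x=\ze_e$ (from \cite[Tab.~7.2]{GJ11}) against the unique non-singleton block of $\cH(C_m,\bx)$ at $x=1$, namely the three linear characters with parameters $1,x,x^2$, identifying the labels $1_W,\sgn,\boldsymbol\si_{m/e}$ through the explicit parametrisation of \cite{Ma95}. Any correct proof has to confront this dihedral-versus-cyclic comparison; your outline does not.
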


\begin{proof}
The proper Levi subgroups of $I_2(m)$ are of maximal tori or of type $A_1$,
thus the only proper $d$-cuspidal pairs are of type $(\TT,1)$, for $\TT$ a
maximal torus containing a Sylow $d$-torus. The Sylow $d$-tori and hence their
relative Weyl groups are cyclic unless $d\in\{1,2\}$, where $d=2$ only occurs
when $m$ is even. Thus, by Corollary~\ref{cor:cyc} for checking the conjecture
we may assume that $d\le2$, and then by Ennola duality that $d=1$.   \par
First assume $m$ is odd and $d=1$. We need to consider $\Phi=x-\zeta$-series
for $\zeta$ some $m$th root of unity. Let $\ze_m=\exp(2\pi i/m)$. By the
action of Galois automorphisms we may assume $\ze=\zeta_m^{m/e}$ for some
divisor $e>2$ of $m$, a primitive $e$th root of unity. According to
\cite[Tab.~7.2]{GJ11} the Iwahori--Hecke algebra $\cH(I_2(m),(x,-1))$
specialised at $x=\ze$ has just one non-singleton block, containing the trivial
and the sign representation plus the representation denoted $\boldsymbol\si_e$.
On the other hand, the Hecke algebra for the principal $\Phi$-series is
$\cH(C_m,\bx)$ with
$$\bx=(1,\zeta_m^kx,x^2: |k|\le (m-1)/2,\ k\ne \pm m/e)$$
by \cite[Satz~6.10]{Ma95}. Its specialisation at $x=1$ again has just one
non-singleton block (see Section~\ref{subsec:cyclic}), containing the linear
characters corresponding to the parameters $1,x,x^2$. By the description in
loc.\ cit.\ these correspond to the unipotent principal series characters
labelled $1_W,\sgn,\boldsymbol\si_{m/e}$, so Conjecture~\ref{conj:spets} is
satisfied.
\par
When $m$ is even, the case when $d=1$ and $e\ge3$ is entirely similar to the
above, so it remains to deal with the case $d=1,e=2$.
Here, by \cite[Tab.~7.2]{GJ11} the Iwahori--Hecke algebra $\cH(I_2(m),(x,-1))$
specialised at $x=-1$ has just one non-singleton block, containing the four
linear characters. By Ennola duality again, the Hecke algebra
$\cH(I_2(m),(x,1))$ specialised at $x=1$ also has just one non-singleton block,
again consisting of the of the four linear characters. By the description in
\cite[Def.~6.3]{Ma95} the linear characters in both cases label the same four
unipotent characters, confirming  Conjecture~\ref{conj:spets} in this case.
In fact, this case also follows from the analogue of Lemma~\ref{lem:Ennola}(b).
\end{proof}

Note that this result generalises Proposition~\ref{prop:G2}, which is for
$I_2(6)$. There is also a twisted spets $\tw2I_2(m)$, as described in
\cite[\S6]{Ma95}. When $m$ is odd, this can be obtained as an Ennola twist
of the untwisted spets of $I_2(m)$, so Conjecture~\ref{conj:spets} holds by
Proposition~\ref{prop:I2(m)} with Remark~\ref{rem:Enn}(b). When $m$ is even,
this includes the case of the twisted finite reductive groups of types
$\tw2B_2$ and $^2G_2$. We have checked that Conjecture~\ref{conj:spets} holds
in this case as well.
\medskip

We next investigate $H_3$. In the following we write $e',e''$ for the two
factors of $\Phi_e$ over $\ZZ[\sqrt{5}]$, where $e\in\{5,10,15,20,30\}$. For
the parameters see \cite[Tab.~8.3]{BM93}. There are four pairs of unipotent
characters of $H_3$ having the same degree, so only for those is there a choice
in the HC-parametrisation.

\begin{prop}   \label{prop:H3}
 Conjecture~\ref{conj:spets} holds for type $H_3$ with respect to some complete
 HC-parametrisation. The block intersections are displayed in
 Table~\ref{tab:H3}.
\end{prop}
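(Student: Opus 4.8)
The plan is to follow the template of the crystallographic exceptional cases (Propositions~\ref{prop:G2}--\ref{prop:E8}), exploiting the two general reductions available. Since $H_3$ has degrees $2,6,10$ and contains $-\Id$, the numbers $d\in\{3,5',5'',6,10',10''\}$ are regular with \emph{cyclic} relative Weyl groups for their principal series, so Corollary~\ref{cor:cyc} settles at once every pair $(d,e)$ for which both relative Weyl groups are cyclic, directly from the defining parameters of the cyclotomic Hecke algebras recorded in \cite[Tab.~8.3]{BM93}. Because $-\Id\in H_3$, Lemma~\ref{lem:Ennola}(b) disposes of the diagonal Ennola pairs ($1\leftrightarrow2$, $3\leftrightarrow6$, $5\leftrightarrow10$), while Lemma~\ref{lem:Ennola}(a) transfers any verified $(d,e)$ to its Ennola dual. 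This leaves only a handful of genuinely non-cyclic configurations.

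The non-cyclic relative Weyl groups that occur are $H_3$ itself, for the principal $\Phi_1$- and $\Phi_2$-series (here $2$ is regular, but its regular element is the central $-\Id$, so the relative group is all of $H_3$), together with a single rank-$2$ dihedral group arising from the cuspidal pairs supported on the corank-$2$ Levi of type $A_1$; the cuspidal pairs on the corank-$1$ Levi $I_2(5)=H_2$ yield only cyclic relative Weyl groups and are already covered above. I would read off these relative Weyl groups from the tables in \cite{BMM14}. For the block data I would take the blocks of the Iwahori--Hecke algebra of $H_3$ from \cite[App.~F]{GP} and those of the dihedral group from \cite{CM11} (as in Proposition~\ref{prop:I2(m)}), filling any gap with the algorithm of Section~\ref{subsec:algo}; since the representations involved are small, this yields exact rather than merely approximate blocks.

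With the block data assembled, the remaining task is to compute, for each surviving pair $(d,e)$, the two block partitions, transport them through $\Psi_{\LL,\la}$ and $\Psi_{\MM,\mu}$ into $\Uch(H_3)$, and check that their common refinement on the intersection of the two Harish-Chandra series is simultaneously a union of blocks on each side, as recorded in Table~\ref{tab:H3}. I expect the genuine obstacle to be the choice of complete HC-parametrisation: four pairs of unipotent characters of $H_3$ share a degree and so are not separated by condition~$(*)$, namely the Galois-conjugate pairs over $\QQ(\sqrt5)$, including the members of the cuspidal family. The subtlety is that these labels must be fixed \emph{once and for all}, consistently across every $(d,e)$ rather than pair by pair --- this coherence is precisely what gives the conjecture its content. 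I would pin down the cuspidal labels using the Frobenius-eigenvalue and central-character data (exactly as for $G_2[\tht],G_2[\tht^2]$ and in Proposition~\ref{prop:reg}), and then verify that one coherent labelling of the remaining principal-series pairs is compatible with every entry of Table~\ref{tab:H3}.
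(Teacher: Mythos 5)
Your overall strategy coincides with the paper's: the published proof is exactly an assembly of the relative Weyl groups and Hecke-algebra parameters from \cite[Tab.~8.3]{BM93}, the decomposition matrices of $W(H_3)$ from M\"uller \cite[\S4]{Mu97}, the cyclic-case mechanics of Sections~\ref{subsec:cyclic} and~\ref{subsec:algo}, and a direct computational check of Table~\ref{tab:H3}, with the Ennola and cyclic reductions you invoke being precisely the tools the paper sets up. However, your structural analysis of the cuspidal pairs of $H_3$ contains a genuine error. There are \emph{no} cuspidal pairs supported on the corank-two Levi of type $A_1$, for any $d$: a Levi of type $A_1$ (or $A_2$, or $A_1\times A_1$, or any of their Ennola twists) has no $d$-cuspidal unipotent characters whatsoever, since all of its unipotent characters lie in the principal $d$-series for every relevant $d$. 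Consequently no rank-two dihedral group ever occurs as a relative Weyl group for $H_3$, and the data from \cite{CM11} you plan to import for it does not enter. The true list is: $W(\TT,1)=H_3$ for the principal series at $d=1,2$; $Z_6$ at $d=3,6$ and $Z_{10}$ at $d=5',5'',10',10''$; $Z_2$ for the two series above $(I_2(5),H_2[\ze_5^2])$ and $(I_2(5),H_2[\ze_5^3])$ (and their Ennola twists at $d=2$); and trivial groups for the cuspidal singletons $H_3[\pm i]$ (and their Ennola twists). Your error is in the ``safe'' direction (a phantom case rather than a missing one), so executing your plan would still cover every actual configuration, but the enumeration as stated is false.

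Two smaller points. First, Corollary~\ref{cor:cyc} is stated only for pairs $(\bL,1)$, $(\bM,1)$ with $d,e$ regular (and conditionally on \cite[($d$-HV3)]{BM93}); the pairs involving the $Z_2$-series have a non-trivial cuspidal character $H_2[\ze_5^j]$, so ``settled at once by Corollary~\ref{cor:cyc}'' needs the extension asserted in Remark~\ref{rem:Enn}(a), or else the unconditional route of computing the cyclic blocks directly from the parameters as in Section~\ref{subsec:cyclic}, which is what the paper does. Second, the paper's source for the $W(H_3)$ block data is \cite{Mu97} rather than \cite[App.~F]{GP}; this matters here because Table~\ref{tab:H3} distinguishes the specialisations at the two factors $5',5''$ (resp.\ $10',10''$) of $\Phi_5$ (resp.\ $\Phi_{10}$) over $\ZZ[\sqrt5]$, so one needs decomposition matrices computed over the splitting field, which M\"uller provides. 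Relatedly, your identification of the four equal-degree pairs is off: they are $\{\ph{4,3},\ph{4,4}\}$, the two pairs $\{H_2[\ze_5^2]\co 2,H_2[\ze_5^3]\co 2\}$ and $\{H_2[\ze_5^2]\co 1^2,H_2[\ze_5^3]\co 1^2\}$, and $\{H_3[i],H_3[-i]\}$, not the $\QQ(\sqrt5)$-Galois-conjugate pairs of the principal series (those have conjugate, hence distinct, degree polynomials and so pose no ambiguity). Your final point --- that the HC-parametrisation must be fixed globally and coherently across all $(d,e)$, pinned down via Frobenius-eigenvalue and central-character data --- is correct and is indeed the substantive part of the verification.
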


\begin{table}[htb]
\caption{Block intersections for $H_3$}   \label{tab:H3}
$\begin{array}{c|cc}
 (d,e)& \text{intersections}\\
\hline
 (1,3)& \{\ph{1,0},\ph{4,3},\ph{5,2}\},
  \{\ph{1,15},\ph{4,4},\ph{5,5}\}\\
 (1,5')& \{\ph{1,0},\ph{3,6},\ph{4,3}\},
  \{\ph{1,15},\ph{3,1},\ph{4,4}\}\\
 (1,10')& \{\ph{1,0},\ph{1,15},\ph{3,1},\ph{3,6}\},\\
  & \{H_2[\ze_5^3]\co1^2,H_2[\ze_5^3]\co2\},\{H_2[\ze_5^2]\co1^2,H_2[\ze_5^2]\co2\} \\
 (3,5')& \{\ph{1,0},\ph{4,3}\},\{\ph{1,15},\ph{4,4}\} \\
 (3,6)&  \{\ph{1,0},\ph{5,5}\},\{\ph{1,15},\ph{5,2}\}\\
 (5',5'')& \{\ph{1,0},H_2[\ze_5^3]\co1^2\},\{\ph{1,15},H_2[\ze_5^2]\co2\},
\{\ph{4, 3},H_2[\ze_5^2]\co1^2\},\{\ph{4, 4},H_2[\ze_5^3]\co2\}\\
 (5',10')& \{\ph{1,0},H_2[\ze_5^2]\co2\},\{\ph{1,15},H_2[\ze_5^3]\co1^2\},
   \{\ph{3,1},H_2[\ze_5^2]\co1^2\},\{\ph{3,6},H_2[\ze_5^3]\co2\}\\
 (5',10'')&  \{\ph{1,0},H_2[\ze_5^2]\co1^2\},\{\ph{1,15},H_2[\ze_5^3]\co2\},
   \{H_2[\ze_5^3]\co1^2,H_2[\ze_5^2]\co2\}\\
\end{array}$
\medskip

Again, we have omitted all singletons, as well as the pairs\\
$(1,2),(1,6),(3,10')$ where there is just one non-singleton block.
\end{table}

\begin{proof}
The relative Weyl groups and the parameters for the corresponding cyclotomic
Hecke algebras are described in \cite[Tab.~8.3]{BM93}. The decomposition
matrices for $W(H_3)$ were computed by M\"uller \cite[\S4]{Mu97}.
\end{proof}

We now turn to $H_4$. There are 29 pairs of characters with equal degree, plus
one family of four characters with that property. So for these is there a choice
in the HC-parametrisation.

\begin{prop}   \label{prop:H4}
 Conjecture~\ref{conj:spets} holds for type $H_4$ with respect to a suitable
 complete HC-parametrisation.
\end{prop}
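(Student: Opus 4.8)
The plan is to verify Conjecture~\ref{conj:spets} for the spets $H_4$ by the same computational strategy used for the crystallographic exceptional types, now carried out inside the spetsial framework of \cite{BMM14}. First I would enumerate, up to the Ennola permutation of Remark~\ref{rem:Enn}(b) and up to swapping the roles of $\Phi,\Phi'$, all relevant pairs of root-of-unity orders $(d,e)$ with $d<e$ that divide the order polynomial of $H_4$; these are the values $e\in\{1,2,3,4,5',5'',6,10',10'',12,15',15'',20',20'',30',30''\}$ appearing in the degree polynomials, where as in the $H_3$ discussion the prime denotes the two factors of $\Phi_e$ over $\ZZ[\sqrt5]$. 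For each pair I would list the $\Phi$- and $\Phi'$-cuspidal pairs $(\LL,\la)$, read off the relative Weyl groups $W_\GG(\LL,\la)$ from the tables of \cite{BMM14}, and record the associated one-parameter Hecke-algebra parameters predicted in \cite[Tab.~8.3]{BM93}. By Remark~\ref{rem:Enn}(a) together with the analogue of Corollary~\ref{cor:cyc}, every pair in which both relative Weyl groups are cyclic is settled automatically, so these can be discarded immediately, leaving only the genuinely non-cyclic cases to treat by hand.

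For the remaining cases I would compute the block decomposition of each specialised cyclotomic Hecke algebra $\cH(W_\GG(\LL,\la),\ze')$ and $\cH(W_\GG(\MM,\mu),\ze)$ using the algorithmic toolkit of Section~\ref{subsec:algo}: the Schur-element criterion (1) to peel off singleton blocks, the central-character/$\bbe$-invariant test (4) to separate blocks, the low-dimensional irreducibility arguments (2), and where necessary the decomposition-number relations (5). The non-cyclic relative Weyl groups occurring for $H_4$ are finite Coxeter groups of type $H_3$, $I_2(m)$ and $A_k$ together with a small number of two-dimensional primitive complex reflection groups; for the Coxeter ones the requisite decomposition matrices are available (those for $H_3$ from \cite{Mu97}, and the others from \cite[App.~F]{GP}), and for the rank-two primitive groups they are in \cite{CM11}. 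With the two block partitions in hand for each pair, I would transport them through the HC-parametrisation bijections $\Psi_{\LL,\la}$ and $\Psi_{\MM,\mu}$ of the spetsial analogue of Theorem~\ref{thm:label} into $\Uch(H_4)$ and check that the two induced partitions of the intersection of the Harish-Chandra series coincide, which is exactly the assertion of Conjecture~\ref{conj:spets}.

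The main obstacle — and the reason this is a proposition rather than a routine remark — is the non-uniqueness of the HC-parametrisation flagged after Theorem~\ref{thm:label}: condition~$(*)$ does not pin down $\Psi_{\LL,\la}$ whenever two unipotent characters share a degree polynomial, and with $H_4$ having $29$ such pairs plus one four-element family, there are a priori an enormous number of joint labelling choices to reconcile. The wording ``with respect to a suitable complete HC-parametrisation'' makes explicit that the content of the proposition is the \emph{existence} of one global labelling that simultaneously matches the block structures across \emph{all} relevant pairs $(d,e)$; I therefore cannot resolve the degree-ambiguous characters independently for each pair. The decisive step is thus to fix, once and for all, a single parametrisation of the ambiguous characters — guided by the Frobenius-eigenvalue and $\bbe$-invariant data that the coarse-block analysis of Proposition~\ref{prop:reg} supplies as a rigidifying constraint — and then to confirm that this one choice is compatible with every block intersection. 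I would expect that, as in type $F_4$, the eigenvalue data cuts down the choices drastically and that a consistent global labelling exists; verifying its compatibility across the full list of pairs is then a finite, if sizeable, computation, whose output is recorded in the repository \cite{git} rather than printed.
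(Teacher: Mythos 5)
Your proposal follows essentially the same route as the paper's proof: reduce via the Ennola permutation of Remark~\ref{rem:Enn} and the cyclic-case analogue of Corollary~\ref{cor:cyc}, compute the block partitions of the specialised cyclotomic Hecke algebras (relative Weyl groups and parameters from \cite[Tab.~8.3]{BM93}) by combining known decomposition matrices with the algorithm of Section~\ref{subsec:algo}, transport them through the bijections $\Psi$, and exhibit one global HC-parametrisation compatible with all pairs simultaneously. That is exactly the paper's (very terse) argument, and your discussion of the $29$ degree-ambiguous pairs plus one four-element family correctly identifies where the real content lies.

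However, your enumeration of the non-cyclic relative Weyl groups --- the part that determines which computations actually have to be done --- contains two concrete errors. First, you omit $W(H_4)$ itself, which is the relative Weyl group of the principal series for $d=1$ and, since $-\Id\in W(H_4)$, also for $d=2$; this is by far the largest single case, and its root-of-unity decomposition matrices come from M\"uller \cite[\S4]{Mu97}, whose paper treats the noncrystallographic types (hence $H_4$ as well as $H_3$), \emph{not} from \cite[App.~F]{GP}, which covers only the crystallographic exceptional Weyl groups. Second, the two-dimensional primitive groups occurring for $H_4$ are $G_{20}$ (for $d=3,6$), $G_{22}$ (for $d=4$) and $G_{16}$ (for $d=5,10$); since $G_{20}$ and $G_{22}$ never arise as relative Weyl groups in finite reductive groups, their block decompositions for the spetsial parameters are not available in \cite{CM11} and must be produced by the algorithm of Section~\ref{subsec:algo} --- which is precisely what the paper does (``the remaining block decompositions were obtained as described in Section~\ref{subsec:algo}''). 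Neither slip invalidates your method, since your toolkit subsumes these computations, but as written the proposal would leave the principal $1$- and $2$-series unverified and would look in the wrong place for the $G_{20}$ and $G_{22}$ blocks.
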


We refrain again from displaying the block intersections.

\begin{proof}
The relative Weyl groups and the parameters for the corresponding cyclotomic
Hecke algebras are described in \cite[Tab.~8.3]{BM93}. The decomposition
matrices for $W(H_3)$ were computed by M\"uller \cite[\S4]{Mu97}. The
remaining block decompositions were obtained as described in
Section~\ref{subsec:algo}.
\end{proof}

\subsection{Verifying the spetsial conjecture}
Finally, we consider Conjecture~\ref{conj:spets} for some of the primitive
spetsial reflection groups $W$. 

The degree polynomials of the unipotent characters of the spets associated
to the complex reflection group $G_4$ are all distinct, so there is a unique
complete HC-parametrisation.

\begin{prop}   \label{prop:G4}
 Conjecture~\ref{conj:spets} holds for types $G_4, G_6, G_8$ and $G_{24}$. The block
 intersections for $G_4$ are displayed in Table~\ref{tab:G4}.
\end{prop}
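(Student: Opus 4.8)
The plan is to verify Conjecture~\ref{conj:spets} for the four spetses $G_4, G_6, G_8, G_{24}$ by the same concrete, case-by-case strategy used in the preceding propositions for the Coxeter cases, but now in the genuinely complex-reflection setting. For each group I would first tabulate the relevant data: the $\Phi$-cuspidal pairs $(\LL,\la)$ (equivalently, the $e$-series to be considered), the relative Weyl groups $W_\GG(\LL,\la)$ attached to each, and the parameters of the associated one-parameter cyclotomic Hecke algebras, all of which are available from \cite{BMM14} and \cite[Tab.~8.1]{BM93}. By Remark~\ref{rem:Enn}(b) (Ennola duality) it suffices to treat one representative $\Phi=x-\ze$ in each Galois/Ennola orbit, which cuts down the number of pairs $(d,e)$ substantially since each of these groups has cyclic centre; and by the analogue of Lemma~\ref{lem:def0} we may discard the trivial cases where $\LL=\GG$ or $\MM=\GG$, so only proper $\Phi$-cuspidal pairs with nontrivial relative Weyl group remain.

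The next step is to compute, for each surviving $\Phi$-cuspidal pair $(\LL,\la)$, the block partition of the specialised algebra $\cH(W_\GG(\LL,\la),\ze')$. Whenever the relative Weyl group is cyclic this is immediate from the parameters via the criterion of Section~\ref{subsec:cyclic}, and by Remark~\ref{rem:Enn}(a) those intersections where \emph{both} relative Weyl groups are cyclic are already settled by the analogue of Corollary~\ref{cor:cyc}. The substantive work is therefore confined to the intersections in which at least one relative Weyl group is a noncyclic primitive complex reflection group; for $G_4, G_6, G_8$ these are the rank-two groups appearing as $W_\GG(\LL,\la)$, whose block decompositions for cyclotomic Hecke algebras were worked out in \cite{CM11} (and, where needed, recomputed by the algorithm of Section~\ref{subsec:algo}), and for $G_{24}$ one uses the blocks of $\cH(G_{24})$ itself together with those of its rank-two relative Weyl groups.

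With these block partitions in hand on the Hecke-algebra side, I would transport them through the HC-parametrisation bijections $\Psi_{\LL,\la}$ onto the unipotent characters $\Uch(\GG)$, using the degree formula~$(*)$ to pin down $\Psi_{\LL,\la}$ on all characters that are distinguished by their degree polynomials. For $G_4$ the degrees are pairwise distinct, so $\Psi$ is unique and the verification reduces to matching the two block partitions induced on each intersection $\Uch(\GG,(\LL,\la))\cap\Uch(\GG,(\MM,\mu))$ and exhibiting the common refinement $\bigsqcup B_i$; the outcome is recorded in Table~\ref{tab:G4}. For $G_6, G_8, G_{24}$, where some unipotent characters share a degree, one must additionally fix a coherent choice of $\Psi$ on each degree-ambiguous pair, simultaneously compatible across all the $(d,e)$ one is comparing.

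The main obstacle I anticipate is precisely this last coherence requirement: the degree condition~$(*)$ does not determine $\Psi$ on the ambiguous pairs, and a choice that makes the blocks match for one pair of series $(\Phi,\Phi')$ must be shown to remain consistent with \emph{all} other series simultaneously. In principle a bad choice could break the conjecture for some intersection even when the block sizes agree, so the real content is to produce a single global HC-parametrisation that works everywhere at once. I expect to resolve this, as in the $F_4$ and $H_3$ cases, by using the central-character/coarse-block invariant of item~(4) in Section~\ref{subsec:algo}---namely the value $\tht(\omega_\chi(\bbe))$ on the central generator $\bbe$ of the braid group---to constrain, and in favourable cases force, the matching of the ambiguous characters across series, and then checking by direct computation (with the data of \cite{Mi15} and the repository \cite{git}) that the resulting global choice indeed makes every block intersection agree.
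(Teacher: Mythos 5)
Your proposal is correct and takes essentially the same approach as the paper: indeed, the paper states this proposition without a separate proof, relying on exactly the computational machinery you describe --- spetses data from \cite{BMM14}, Ennola/Galois reduction via Remark~\ref{rem:Enn}, the cyclic-case criterion of Section~\ref{subsec:cyclic} with Corollary~\ref{cor:cyc}, rank-two blocks from \cite{CM11}, the algorithm of Section~\ref{subsec:algo}, and a coherent choice of HC-parametrisation on degree-ambiguous characters (unique for $G_4$), with the resulting intersections recorded in Table~\ref{tab:G4} and the repository \cite{git}. The only slight slip is citing \cite[Tab.~8.1]{BM93} for the Hecke-algebra parameters, which for these spetsial groups come instead from \cite{Ma95,Ma97,BMM14}, but this does not affect the argument.
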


\begin{table}[htb]
\caption{Block intersections for $G_4$}   \label{tab:G4}
$\begin{array}{c|cc}
 (d,e)& \text{intersections}\\
\hline
 (1,2)& \{[\ph{1,0},\ph{1,4},\ph{1,8},\ph{2,3},\ph{2,1},\ph{3,2}\},
  \{Z_3[2],Z_3[1^2]\}\\
 (1,3')& \{\ph{1,0},\ph{1,8}\},\{\ph{2,5},\ph{2,1}\}\\
 (1,4)& \{\ph{1,0},\ph{2,5},\ph{3,2}\}\\
 (1,6')& \{\ph{1,0},\ph{1,4},\ph{2,1}\},\{Z_3[2],Z_3[1^2]\}\\
 (3',3'')& \{\ph{1,0},Z_3[1^2]\},\{\ph{2,5},Z_3[2]\}\\
 (3',4)& \{\ph{1,0},\ph{2,5}\}\\
 (3',6')& \{\ph{1,0},Z_3[2]\},\{\ph{2,1},Z_3[1^2]\}\\
 (3',6'')& \{\ph{1,0},Z_3[1^2]\},\{\ph{1,8},Z_3[2]\}\\
\end{array}$

Again, we have omitted all singletons, as well as pairs that can be obtained\\
by a Galois automorphism from pairs displayed in the table, like $(1,3''),(1,6'')$.
\end{table}

We also have a partial result for the 5-dimensional group $G_{33}$. Here,
again, the dimensions of some irreducible representations are too large to be
handled by our methods, and we only obtained approximate block decompositions.

\begin{prop}   \label{prop:G33}
 Conjecture~\ref{conj:spets} holds for type $G_{33}$ except possibly when one
 of the two Harish-Chandra series is the principal $d$-series, with
 $d\in\{1,2\}$.
\end{prop}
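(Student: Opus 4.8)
The plan is to verify Conjecture~\ref{conj:spets} for the spets associated to~$G_{33}$ case-by-case over all relevant pairs $(d,e)$ of $\Phi$-series, excluding only the two principal-series cases $d\in\{1,2\}$ where the reflection group $W_\GG(\LL,1)=G_{33}$ itself appears and its large-dimensional irreducible representations defeat our block-computation methods. First I would enumerate the proper $\Phi$-cuspidal pairs $(\LL,\la)$ of~$\GG$ and compute their relative Weyl groups $W_\GG(\LL,\la)$ together with the parameters of the associated cyclotomic Hecke algebras, reading these off from \cite[Tab.~1 and~3]{BMM} and \cite[Tab.~8.1]{BM93}. As in the Lie-type cases treated in Section~\ref{sec:Lie}, many of these relative Weyl groups will be cyclic, and for those pairs the conjecture holds immediately by the analogue of Corollary~\ref{cor:cyc} (see Remark~\ref{rem:Enn}(a)), so the real content concentrates on the pairs where $W_\GG(\LL,\la)$ is a genuine (rank $\ge2$) complex reflection group.

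Next I would reduce the workload by exploiting the Ennola symmetry of Remark~\ref{rem:Enn}(b): since the centre of~$G_{33}$ is cyclic, it suffices to check one representative in each Ennola orbit of pairs $(d,e)$, and likewise one representative under the action of Galois automorphisms permuting the split factors $e',e''$ of the cyclotomic polynomials (exactly as was done for $H_3$ in Proposition~\ref{prop:H3} and for $G_4$ in Table~\ref{tab:G4}). For each surviving non-cyclic pair I would then compute the block decomposition of the relevant specialised Hecke algebra $\cH(W_\GG(\LL,\la),\ze')$ using the algorithmic toolkit of Section~\ref{subsec:algo} --- the Schur-element criterion~(1), the small-dimension arguments~(2), the central-character invariant~(4) using the braid generator~$\bbe$, and the decomposition-matrix relation~(5). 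Having obtained the two block partitions on the respective character sets, I would fix a complete HC-parametrisation compatible with~$(*)$ and verify directly that the images of the blocks under $\Psi_{\LL,\la}$ and $\Psi_{\MM,\mu}$ produce a common refinement $\bigsqcup B_i$ of the intersection $\Uch(\GG,(\LL,\la))\cap\Uch(\GG,(\MM,\mu))$, with each $B_i$ pulling back to a block on both sides.

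The main obstacle is precisely the one flagged in the statement and in Section~\ref{subsec:E8}: for the two principal $d$-series with $d\in\{1,2\}$ the relative Weyl group is all of~$G_{33}$, a $5$-dimensional group whose irreducible representations have dimensions too large (and whose \Chevie\ data are too incomplete) for our heuristic algorithm to resolve the blocks exactly. For these cases our methods yield only \emph{approximate} blocks --- partitions that are provably at least as coarse as the true block partition but which might merge several genuine blocks --- so I cannot certify the conjecture there, which is exactly why the statement carries the exception clause. For every other pair, where at least one relative Weyl group is cyclic or of manageable rank, the explicit computations go through and the predicted block bijection is confirmed; I would close by noting (as elsewhere) that all computed block intersections are recorded in the repository~\cite{git}.
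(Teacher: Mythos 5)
Your proposal is correct and matches the paper's (implicit, purely computational) proof: the paper verifies Conjecture~\ref{conj:spets} for $G_{33}$ by running the block algorithm of Section~\ref{subsec:algo} on all non-principal series (where, as the paper notes, the rank-3 group $G_{26}$ appears as relative Weyl group for $d=3,6$), excluding exactly the principal $d$-series with $d\in\{1,2\}$ because there the relative Weyl group is $G_{33}$ itself and only approximate (possibly too coarse) block partitions can be obtained. One small correction: since $G_{33}$ is a spets rather than a finite reductive group, the relative Weyl groups and cyclotomic Hecke algebra parameters are read off from the spetsial data of \cite{BMM14} (and \cite{Ma95,Ma97}), not from \cite[Tab.~1 and~3]{BMM} or \cite[Tab.~8.1]{BM93}, which cover only the Weyl group case.
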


In this case, the 3-dimensional group $G_{26}$ occurs as a relative Weyl group
for $d=3,6$.


\end{document}